\documentclass[oneside, reqno, 10pt]{amsart}
\usepackage{stmaryrd}

\makeatletter
\renewcommand{\l@section}{\@tocline{1}{0pt}{1pc}{1pc}{}} 
\makeatother

\usepackage{titlesec}

\titleformat{\section}
 {\normalfont\Large\bfseries}
 {\thesection}{1em}{}

\titleformat{\subsection}
 {\normalfont\large\bfseries} 
 {\thesubsection}{1em}{}

\titleformat{\subsubsection}
 {\normalfont\normalsize\bfseries}
 {\thesubsubsection}{1em}{}

\author[]{Akihiro Munemasa}
\address{Graduate School of Information Sciences,
Tohoku University, Aoba Ward, Sendai, 980-8579, Japan}
\email{munemasa@tohoku.ac.jp}
\thanks{The research of A.~Munemasa was supported by JSPS KAKENHI Grant Number 25K07095.}

\author[]{Ferenc Sz\"{o}ll\H{o}si}
\address{Department of Mathematical Sciences, Interdisciplinary Faculty of Science and Engineering, Shimane University, 1060 Nishikawatsucho, Matsue, Shimane, Japan}
\email{szollosi@riko.shimane-u.ac.jp}
\thanks{The research of F.~Sz\"{o}ll\H{o}si was supported by JSPS KAKENHI Grant Number 24K06829.}

\author[]{Kiyoto Yoshino}
\address{Department of Information Science, Faculty of Science, Toho University, 2-2-1 Miyama, Funabashi, Chiba 274-8510, Japan}
\email{kiyoto.yoshino@is.sci.toho-u.ac.jp}

\usepackage{amsmath,amsthm,amssymb,amsfonts,color,comment,stix}
\usepackage[T1]{fontenc}
\usepackage{times}
\usepackage[showonlyrefs]{mathtools}
\mathtoolsset{showonlyrefs}

\usepackage[shortlabels]{enumitem}
\setlist[enumerate,1]{label={\upshape(\roman*)}}

\newcommand{\nexteq}{\displaybreak[0]\\ &=}

\numberwithin{equation}{section}

\newtheorem{lemma}{Lemma}[section]
\newtheorem{proposition}[lemma]{Proposition}%[section]
\newtheorem{theorem}[lemma]{Theorem}%[section]
\newtheorem{corollary}[lemma]{Corollary}%[section]

\theoremstyle{definition}
\newtheorem{definition}[lemma]{Definition}%[section]
\newtheorem{remark}[lemma]{Remark}%[section]
\DeclareMathOperator{\Aut}{Aut}
\DeclareMathOperator{\disc}{disc}

\DeclareMathOperator{\sw}{sw}

\DeclareMathOperator{\Sym}{Sym}
\DeclareMathOperator{\TD}{TD}
\newcommand{\cB}{\mathcal{B}}

\newcommand{\aes}{\omega}

\newcommand{\cX}{\mathcal{X}}

\newcommand{\cS}{\mathcal{S}}
\newcommand{\cC}{\mathcal{C}}

\newcommand{\cL}{\mathcal{L}}

\newcommand{\R}{\mathbb{R}}
\newcommand{\Z}{\mathbb{Z}}

\newcommand{\cM}{\mathcal{M}}
\newcommand{\latin}{S}

\newcommand{\sr}{\rho} %{(0,0,2\alpha_1)}
\usepackage{bbm} % preamble
\newcommand{\one}{\mathbbm{1}}

\title[]{Sets of equiangular lines in dimension $18$ constructed from $A_5^3 \oplus A_1^4$}

\begin{document}
\keywords{equiangular lines, root lattice, Latin squares, switching equivalence}
\subjclass[2020]{05B40, 05B20, 05C50, 05C70}
\begin{abstract}
	In 2023, Greaves, Syatriadi, and Yatsyna found a set of $57$ equiangular lines in $\mathbb{R}^{18}$, breaking the previous record.
	In 2025, Lin, Munemasa, Taniguchi, and Yoshino constructed a large number of sets of $57$ equiangular lines in $\mathbb{R}^{18}$ as affine equiangular sets in an integral overlattice of $A_9^2 \oplus A_1$.
	In this paper, we construct further sets of $57$ equiangular lines in $\mathbb{R}^{18}$ from Latin squares of order $6$ and Pasch configurations, realized as affine equiangular sets in an integral overlattice of $A_5^3 \oplus A_1^4$.
	Unlike the previously known examples, these sets are not strongly maximal.
	Moreover, some of them have only five distinct Seidel eigenvalues, fewer than any previously known examples.
\end{abstract}

\maketitle
%\tableofcontents

\section{Introduction}
Equiangular lines in Euclidean spaces form a classical topic in combinatorics.
A set of lines through the origin in a Euclidean space is called \emph{equiangular} if any two distinct lines form the same angle.
Since the early work of Haantjes in the 1940s~\cite{Haantjes1948}, a central program has been to determine the maximum number $N(d)$ of equiangular lines in $\mathbb{R}^d$.
The values of $N(d)$~\cite{Greaves2022, Greaves2020, Greaves2023, Haantjes1948, lemmens1973, Lint1966} and bounds for $N(d)$~\cite{de2000, GHAF2016, lemmens1973} have been studied.
For convenience, we summarize the known values or best bounds for $d \leq 23$ in Table~\ref{table:N(d)},
where $\arccos(1/\alpha)$ denotes the common angle of a set of equiangular lines in $\mathbb{R}^d$ attaining $N(d)$.

\begin{table}[hbtp]
	\caption{The values or bounds of $N(d)$ for $d \leq 23$.}
	\label{table:N(d)}
	\centering
	\begin{tabular}{c|ccccccccccccccc}
		$d$    & 2 & 3          & 4              & 5  & 6  & 7--14   & 15 & 16 & 17 & 18     & 19     & 20     & 21  & 22  & 23  \\
		\hline
		$N(d)$ & 3 & 6          & 6              & 10 & 16 & 28      & 36 & 40 & 48 & 57--59 & 72--74 & 90--94 & 126 & 176 & 276 \\
		$\alpha$
		       & 2 & $\sqrt{5}$ & $\sqrt{5},\,3$ & 3  & 3  & $3,\,5$
		       & 5 & 5          & 5              & 5  & 5  & 5       & 5  & 5  & 5
	\end{tabular}
\end{table}

For a fixed angle $\arccos(1/\alpha)$, a more precise problem is to determine the maximum number $N_{\alpha}(d)$ of equiangular lines in $\mathbb{R}^d$ with common angle $\arccos(1/\alpha)$.
It is known that if $N_{\alpha}(d) > 2d$, then $\alpha$ must be an odd integer~\cite{lemmens1973}.
Consequently, much of the literature focuses on the cases where $\alpha$ is an odd integer.

In high dimensions, Jiang et al.\ proved that for every odd integer $\alpha \geq 3$,
$N_{\alpha}(d) = \lfloor (\alpha+1)(d-1)/(\alpha-1) \rfloor$
for all sufficiently large $d$~\cite{Zhao2021}.
More precise results are known for the angles $\arccos(1/3)$ and $\arccos(1/5)$.
For the common angle $\arccos(1/3)$, Lemmens and Seidel~\cite{lemmens1973} completely determined the values of $N_{3}(d)$.
Specifically, $N_{3}(d) = 2(d-1)$ for all $d \geq 15$.
For the common angle $\arccos(1/5)$, the so-called Lemmens--Seidel conjecture,
$N_{5}(d) = \max\{276, \lfloor (3d-3)/2 \rfloor\}$ for all $d \geq 23$,
has been proved in~\cite{cao2022,lemmens1973,yoshino2022}.

While the values $N_{\alpha}(d)$ in high dimensions are well understood~\cite{Zhao2021},
the exact values of $N_{\alpha}(d)$ remain unknown in low dimensions for each fixed angle.
Among the cases where $\alpha$ is odd, the only nontrivial angle for which the problem has been completely resolved is $\arccos(1/3)$~\cite{lemmens1973}.
In this case, an infinite family attains $N_{3}(d) = 2(d-1)$ for all $d \geq 15$,
and the remaining maximum sets of equiangular lines for $d \leq 14$ have been explicitly determined~\cite{cao2022}.
The former can be constructed in a simple manner~\cite{lemmens1973}, whereas the latter are realized in dimension $7$ as subsets of the equiangular line set $\Psi_{7}$ associated with the $E_8$ root system, which attains $N_{3}(7) = 28$~\cite{cao2022}.

Turning to the angle $\arccos(1/5)$, the set $\Psi_{23}$ of $N_{5}(23)$ equiangular lines in dimension $23$ is known to be unique~\cite{goethals1975}.
This configuration may be viewed as an analogue of $\Psi_{7}$ for the angle $\arccos(1/5)$ in the sense that it is in fact related to the Leech lattice.
However, in contrast to the case of $\arccos(1/3)$, there exist sets of equiangular lines with common angle $\arccos(1/5)$ in dimension $18$ that are not contained in $\Psi_{23}$~\cite{yoshino2022}.
As a result, the situation is considerably more intricate, and as shown in Table~\ref{table:N(d)}, the exact values of $N_{5}(d)$ are not known for $d = 18, 19, 20$.

Motivated by this situation,
in this paper we investigate sets of equiangular lines with common angle $\arccos(1/5)$ in dimension $18$.
A series of improvements of upper bounds~\cite{Greaves2022, lemmens1973} and lower bounds~\cite{Greaves2023,Lin2020b,Szollosi2019} on $N_{5}(18)$ has been obtained by a variety of methods.
Earlier constructions were based on removing suitable lines from $\Psi_{23}$,
whereas more recent approaches rely on searches for integer vectors with prescribed inner products.
These methods have led to the bound $N_{5}(18) \geq 57$.
Moreover, such configurations are known to be strongly maximal, in the sense that they cannot be extended even in higher dimensions~\cite{yoshino2022}.
Furthermore, numerous non-isometric examples of $57$ equiangular lines in dimension $18$ have been constructed~\cite{LMTY2025} as affine equiangular sets in an integral overlattice of $A_9^2\oplus A_1$.
In the meantime, the second author found a set of $57$ equiangular lines in dimension $18$ by extending a (maximal) set of $48$ equiangular lines in dimension $17$.
It turns out that this set of $57$ equiangular lines gives rise to an affine equiangular set in an integral overlattice of $A_5^3\oplus A_1^4$.
Our main result, Theorem~\ref{thm:main}, determines all maximum affine equiangular sets lying in this overlattice.
This shows that the corresponding sets of $57$ equiangular lines in dimension $18$ can be constructed from a Latin square of order $6$ giving rise to $36$ equiangular lines, together with three sets of $7$ equiangular lines each coming from a Pasch configuration.
Moreover, we prove in Theorem~\ref{thm:maximality} that these examples are maximal,
in the sense that they cannot be extended within the same dimension.
However, it turns out that, once the dimension is increased, further lines can be added to each of these examples.
In particular, we show in Theorem~\ref{thm:stronglymaximality} that they are not strongly maximal.
These appear to be the first known examples of $57$ equiangular lines in dimension $18$ with this property.
Also, in Corollary~\ref{cor:characteristic}, we obtain sets of equiangular lines whose associated Seidel matrices have
only five distinct eigenvalues,
which is one fewer than in previously known constructions in~\cite{LMTY2025}.
It is worth mentioning that despite our extensive random generation of $57$ equiangular lines in dimension $18$, we were not able to find any equiangular lines not covered by the classification done in~\cite{LMTY2025} or in the present paper.

Our approach uses the framework of affine equiangular sets to relate sets of equiangular lines to combinatorial objects such as Latin squares.
For details on affine equiangular sets, we refer to~\cite{LMTY2025}.
Briefly, a set of equiangular lines with common angle $\arccos(1/5)$ is spanned by vectors of norm $5$ with pairwise inner products $\pm 1$.
After a suitable affine transformation, these vectors become a set of vectors of norm $3$ with pairwise inner products $0$ or $1$, which we call an affine equiangular set.
Since the lattice generated by an affine equiangular set often has a smaller discriminant, it can be easier to handle.
In fact, it is often the case that the resulting lattice contains a root sublattice of full rank. This allows us to describe affine equiangular sets using root vectors.

This paper is organized as follows.
In Section~\ref{sec:preliminaries}, we introduce some combinatorial objects and basic definitions on lattices.
In Section~\ref{sec:A}, we describe an integral overlattice of $A_5^3\oplus A_1^4$, in which we found affine equiangular sets of cardinality $57$.
In Sections~\ref{sec:beta0} and \ref{sec:beta123}, we explain our constructions in terms of combinatorial objects.
In Section~\ref{sec:main}, we summarize the previous sections and determine all maximum affine equiangular sets in the overlattice.
In Section~\ref{sec:maximality}, we prove that these affine equiangular sets are maximal.
Finally, in Section~\ref{sec:strongmaximality}, we show that these are not strongly maximal.

\section{Preliminaries} \label{sec:preliminaries}
In this section, we introduce some combinatorial objects and basic definitions on lattices.

\subsection{Combinatorics}

In this section, we introduce some combinatorial objects used in this paper.
Using them, we will describe affine equiangular sets.

\begin{definition}[Transversal design]
	A \emph{transversal design} $\TD(k,n)$ is a triple $(X,\mathcal{G},\mathcal{B})$ satisfying the following conditions:
	\begin{enumerate}
		\item $X$ is a set of $kn$ points.
		\item $\mathcal{G}$ is a partition of $X$ into $k$ $n$-subsets.
		\item $\mathcal{B}$ is a set of $k$-subsets of $X$ satisfying that for any $x \in G$ and $y \in G'$ with $G \neq G' \in \mathcal{G}$, there exists a unique $B \in \mathcal{B}$ such that $\{x,y\} \subset B$.
	\end{enumerate}
	The elements of $\mathcal{G}$ are called \emph{groups} and the elements of $\mathcal{B}$ are called \emph{blocks}.
\end{definition}

\begin{definition}	\label{dfn:latin}
	A \emph{Latin square} is an $n \times n$ array filled with $n$ different symbols, each occurring exactly once in each row and exactly once in each column.
\end{definition}
The group $S_3 \ltimes S_n^3$ naturally acts on the set of Latin squares of order $n$ by permuting the three roles ``rows'', ``columns'' and ``symbols'' and permuting the rows, columns and symbols.
Also, a Latin square $L$ of order $n$ induces the transversal design $\TD(3,n)$ with groups
$\{\{1,\ldots,n\},\{n+1,\ldots,2n\},\{2n+1,\ldots,3n\}\}$
and blocks $\{\{i,n+j,2n+L_{i,j}\} : i,j \in [n]\}$.
Conversely, the Latin square can be recovered from the transversal design.
Thus, we may identify Latin squares of order $n$ with transversal designs $\TD(3,n)$.

In particular, the transversal design $\TD(3,2)$ is known as a \emph{Pasch configuration}.
Here, a \emph{Pasch configuration} is an incidence structure $(V,\cB)$ satisfying:
\begin{enumerate}
	\item $\lvert V\rvert = 6$ and $\lvert \cB\rvert = 4$,
	\item each point $v\in V$ lies on exactly two blocks,
	\item each block $B\in\cB$ contains exactly three points,
	\item (Linearity) any two distinct blocks intersect in at most one point.
\end{enumerate}
Indeed, the sets $\{1,3,5\}$, $\{1,4,6\}$, $\{2,3,6\}$, and $\{2,4,5\}$ form a Pasch configuration.
This is also regarded as a transversal design $\TD(3,2)$ with $3$ groups $\{1,2\}$, $\{3,4\}$, and $\{5,6\}$.
The corresponding Latin square of order $2$ with two symbols is
$\left[
		\begin{smallmatrix}
			1 & 2 \\
			2 & 1
		\end{smallmatrix}
		\right]$.
Also, the complement in the set $[6]$ of this Pasch configuration is another Pasch configuration,
and it induces the Latin square
$\left[
		\begin{smallmatrix}
			2 & 1 \\
			1 & 2
		\end{smallmatrix}
		\right]$.
These are the only two Latin squares of order $2$,
and hence, with a fixed set of groups, there are only two transversal designs $\TD(3,2)$.

Also, we introduce some basic definitions on graphs.
We consider only undirected graphs, without loops or multiple edges.
For two graphs $G$ and $H$, the cartesian product $G \boxempty H$ of $G$ and $H$ is the graph with vertex set $V(G) \times V(H)$, where two vertices $(u,v)$ and $(u',v')$ are adjacent if and only if either $u=u'$ and $v$ is adjacent to $v'$, or $v=v'$ and $u$ is adjacent to $u'$.
The \emph{distance} between two vertices $u$ and $v$ in a connected graph is defined to be the length of a shortest path connecting $u$ and $v$.
For a connected graph $G$, the distance $2$-or-$3$ graph of $G$ is the graph with the same vertex set as $G$, where two vertices are adjacent if and only if their distance in $G$ is either $2$ or $3$.
For positive integers $d$ and $n$, the Hamming graph $H(d,n)$ is the cartesian product of $d$ copies of the complete graph $K_n$ on $n$ vertices.

\subsection{Lattices}
In this subsection, we introduce some basic definitions and notation on lattices.
For more information, see~\cite{E}.
A \emph{lattice} is the set $L$ of integral linear combinations of some $\R$-linearly independent vectors $u_1,\ldots,u_m \in \R^n$ ($m \leq n$).
This lattice is written as $\langle \{u_1,\ldots,u_m\} \rangle$.
For two lattices $L$ and $M$ with $L \supset M$, we say that $M$ is a sublattice of $L$, and that $L$ is an overlattice of $M$.
Also, denote by $(u,v)$ the inner product of two vectors $u$ and $v$.
For two lattices $L$ and $M$, we write $L \oplus M$ or $LM$ for the orthogonal sum of $L$ and $M$.
The orthogonal sum of $n$ copies of a lattice $L$ is denoted by $L^n$.

A lattice $L$ is said to be \emph{integral} if $(u,v) \in \Z$ for any $u,v \in L$.
For an integral lattice $L$, the \emph{dual} $L^*$ of $L$ is defined to be the set $\{v \in \R L \mid (u,v) \in \Z \text{ for any } u \in L\}$,
where $\R L$ is the $\R$-linear span of $L$.
The \emph{discriminant} of an integral lattice $L$ is defined to be the order of the quotient group $L^*/L$, and is denoted by $\disc L$.
For each vector $u$, we call $(u,u)$ the \emph{norm} of $u$.
A vector is called a \emph{root} if its norm is $2$.
An integral lattice is called a \emph{root lattice} if it is generated by roots.
It is well known that irreducible root lattices are classified into three types $A$, $D$ and $E$.

Next, we describe the root lattices $A_n$ of type $A$ and rank $n$ and their dual.
For detailed explanations, see \cite{Conway1999, Ozeki2018}.
The root lattice $A_n$ is defined as $\{(x_1,\ldots,x_{n+1}) \in \Z^{n+1} \mid x_1 + \cdots + x_{n+1} = 0\}$.
Let
\[
	\alpha_n=\left(\frac{1}{n+1},\dots,\frac{1}{n+1},-\frac{n}{n+1}\right)\in A_n^*.
\]
For $i\in[6]$, let
\[
	\alpha_5^{(i)}=\begin{cases}
	\alpha_5&\text{if $i=6$,}\\
	\alpha_5^{(i\;6)}&\text{otherwise,}
	\end{cases}
\]
where $(i\;6) \in S_6$ is a transposition on the set $[6]$
acting on $\R^6$ by coordinate permutation.

\begin{lemma}[\cite{Ozeki2018}] \label{lem:minA}
	For each $m \in [n+1]$, the $\binom{n+1}{m}$ minimum representatives of $m \alpha_n + A_n \in A_n^*/A_n$ are given by
	\begin{align}\label{enum_min}
		\frac{m}{n+1} \sum_{i \in [n+1] \setminus I} e_i-\frac{n+1-m}{n+1} \sum_{i \in I} e_i \qquad ( I \in \binom{[n+1]}{m} )
	\end{align}
	with norm
	\begin{align}\label{min}
		\frac{m(n+1-m)}{n+1}.
	\end{align}
\end{lemma}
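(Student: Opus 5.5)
The plan is to describe the coset $m\alpha_n + A_n$ explicitly and then minimize the norm over it by a direct coordinate argument.

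\emph{Description of the coset.} A vector $x \in \R^{n+1}$ with coordinate sum $0$ lies in $m\alpha_n + A_n$ exactly when $x - m\alpha_n \in \Z^{n+1}$. Every coordinate of $m\alpha_n$ is congruent to $\frac{m}{n+1}$ modulo $\Z$, since $-\frac{mn}{n+1} = \frac{m}{n+1} - m$. Hence the coset consists of all vectors $x = \frac{m}{n+1}\one + z$ with $z \in \Z^{n+1}$ and $\sum_i x_i = 0$. The latter condition means $\sum_i z_i = -m$.

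\emph{Minimization.} Write $c = \frac{m}{n+1} \in (0,1]$. Then $(x,x) = \sum_i (c+z_i)^2$, and we minimize this over integer vectors $z$ with $\sum_i z_i = -m$.
\begin{enumerate}
\item First suppose $m \le n$, so $0 < c < 1$. For each single coordinate, the function $t \mapsto (c+t)^2$ on $\Z$ is minimized at $t=0$ (value $c^2$), with $t=-1$ next (value $(1-c)^2$). Every other integer $t$ gives a value at least $\min\{(1+c)^2,(2-c)^2\}$, which is strictly larger than both.
\item Convexity gives an exchange step. If some $z_i \le -2$ and some $z_j \ge 0$, then replacing $(z_i,z_j)$ by $(z_i+1,z_j-1)$ strictly decreases the norm. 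If some $z_j \ge 1$, then since the sum is negative some $z_i \le -1$ exists, and the exchange again strictly decreases the norm.
\item It follows that at a minimizer every $z_i$ lies in $\{0,-1\}$. The sum condition then forces exactly $m$ coordinates to equal $-1$. Let $I$ be this set of $m$ coordinates.
\item Such a vector is exactly the vector displayed in \eqref{enum_min}, since $c-1 = -\frac{n+1-m}{n+1}$. There are $\binom{n+1}{m}$ choices of $I$, and every choice gives the same norm
\[
(n+1-m)c^2 + m(1-c)^2 = \frac{m(n+1-m)}{n+1},
\]
which is \eqref{min}.
\end{enumerate}

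\emph{The case $m = n+1$.} Here the coset is $A_n$ itself. The unique minimal vector is $0$, which agrees with the formula: $I = [n+1]$ gives the zero vector, $\binom{n+1}{n+1} = 1$, and the norm is $0$.

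The only point needing care is step (ii). The exchange argument must be written with the strict inequalities that come from the strict convexity of $t \mapsto (c+t)^2$, so that it also proves that these are \emph{all} the minimizers, not merely that the minimum value is \eqref{min}. Everything else is routine.
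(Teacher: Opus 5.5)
Your proof is correct. The paper gives no argument of its own for this lemma and only cites Ozeki's book, so your proposal serves as a self-contained elementary verification. You rewrite the coset as $\{c\one+z : z\in\Z^{n+1},\ \sum_i z_i=-m\}$ with $c=\frac{m}{n+1}$, and then use the exchange $(z_i,z_j)\mapsto(z_i+1,z_j-1)$, which changes the norm by $2(z_i-z_j)+2\le -2$. This is the standard way to identify the minimal vectors of the glue classes of $A_n$.

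Two small points should be made explicit when you write it up.
\begin{enumerate}
\item In step (ii), eliminating a coordinate $z_i\le -2$ requires some $z_j\ge 0$. Such a $z_j$ exists: if every $z_j\le -1$, then with $z_i\le -2$ you get $\sum_j z_j\le -(n+2)<-m$.
\item To conclude that every listed vector is a minimizer, and not merely that every minimizer is listed, you need one more observation. Either note that a minimizer exists because the coset is discrete. Or observe that each exchange lowers the norm by at least $2$, so repeated exchanges from any vector of the coset terminate at a vector in $\{0,-1\}^{n+1}$ with exactly $m$ entries equal to $-1$. All such vectors have norm $\frac{m(n+1-m)}{n+1}$, so every other vector has strictly larger norm.
\end{enumerate}
Step (i) is not actually needed once step (ii) is in place.
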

For a set of vectors $V$, we write $\min V$ for the minimum norm of vectors in $V$.
By Lemma~\ref{lem:minA}, we have
\begin{align}
	\min\left(\alpha_5+A_5 \right)
	                                        & =\min\left(5\alpha_5+A_5\right)=\frac56,\label{A5min}  \\
	\min\left(2\alpha_5+A_5\right)
	                                        & =\min\left(4\alpha_5+A_5\right)=\frac43,\label{A5min2} \\
	\min\left(3\alpha_5+A_5\right)
	                                        & =\frac32,\label{A5min3}                                \\
	\min\left(\alpha_1+A_1\right)
	                                        & =\frac12,\label{A1min}                                 \\
	\{x\in\alpha_5+A_5\mid (x,x)=\frac56\}  & =\{\alpha_5^{(i)}\mid i\in[6]\},
	\label{A5mv}                                                                                     \\
	\{x\in3\alpha_5+A_5\mid (x,x)=\frac32\} & =\{\alpha_I\mid I\in\binom{[6]}{3}\},
	\label{A5mv3}                                                                                    \\
	\{x\in\alpha_1+A_1\mid (x,x)=\frac12\}  & =\{\pm\alpha_1\},
	\label{A1mv}
\end{align}
where
\begin{align}\label{A5mv_def}
	\alpha_I=\frac12
	(\sum_{i\in[6]\setminus I}e_i-\sum_{i\in I}e_i)\quad
	(I\in\binom{[6]}{3}).
\end{align}

\subsection{Affine equiangular sets}
In this subsection, following \cite{LMTY2025},
we introduce the notion of affine equiangular sets.
For proofs, see~\cite{LMTY2025}.

\begin{definition}[{\cite[Definition~2.1]{LMTY2025}}]	\label{dfn:aes}
	Let $s$ be a real number at least $1$.
	Let $r$ be a root in $\R^N$, where $N \in \Z_{\geq 1}$.
	The set of vectors $u_1,\ldots,u_n$ in $\R^N$ is called an \emph{affine equiangular set with norm $s$ with respect to $r$} if
	\begin{enumerate}
		\item $(u_1,r) = \cdots = (u_n,r) = 1$,
		\item $(u_i, u_j) = \begin{cases}
				      s   & \text{ if } i = j, \\
				      0,1 & \text{ otherwise.}
			      \end{cases}$
	\end{enumerate}
	%For simplicity, we will often refer to this as an affine equiangular set.
	The root $r$ is called the \emph{switching root} of the affine equiangular set.
\end{definition}

Affine equiangular sets are closely related to equiangular lines.
Let $\aes=\{u_1,\ldots,u_n\}$ be an affine equiangular set with norm $s$ with respect to a switching root $r$.
For each $u \in \aes$, set
\begin{align}\label{eq:tilde_def}
	\widetilde{u}:=\frac{2u-r}{\sqrt{2}}.
\end{align}
Then a direct computation gives
\begin{align}\label{eq:tilde}
	(\widetilde{u_i},\widetilde{u_j})=
	\begin{cases}
		2s-1  & \text{if } i=j,   \\
		\pm 1 & \text{otherwise.}
	\end{cases}
\end{align}
Hence, this implies that the matrix obtained from the Gram matrix of
$\widetilde{u_1},\ldots,\widetilde{u_n}$ by modifying its diagonal entries to zero
is a Seidel matrix.
Furthermore,
\[
	\R \widetilde{u_1} ,\ldots,\R \widetilde{u_n}
\]
form a set of equiangular lines in the subspace
$\langle u_1,\ldots,u_n\rangle_{\R}\cap r^\perp$ with common angle
$\arccos(1/(2s-1))$.
We say that this set of equiangular lines is induced by $\aes$.
Conversely, any set of equiangular lines with common angle $\arccos (1/(2s-1))$
can be induced by an affine equiangular set of norm $s$.

\begin{definition}\label{dfn:switching}
	Let $\aes$ and $\aes'$ be affine equiangular sets with respect to a common switching root $r$.
	We say that $\aes$ and $\aes'$ are \emph{switching equivalent}, and write $\aes\sim_{\sw}\aes'$,
	if there exists a bijection $g:\aes\to\aes'$ such that $g(u)\in\{u,r-u\}$ for every $u\in\aes$.
	The \emph{switching class} of $\aes$ is defined to be the set $[\omega]$ consisting of all affine equiangular sets switching equivalent to $\aes$.
\end{definition}

Switching equivalent affine equiangular sets induce the same set of equiangular lines.
In this paper, we say that a mapping between sets of vectors is an \emph{isometry} if it preserves inner products.

\begin{definition}\label{dfn:isometry}
	Let $\aes$ be an affine equiangular set with a switching root $r$ and let $\aes'$ be one with a switching root $r'$.
	We say that $\aes$ and $\aes'$ are \emph{isometric} if there exists an affine equiangular set $\aes''$
	with $\aes''\sim_{\sw}\aes'$ and a bijective isometry
	\[
		h:\aes\cup\{r\} \to \aes''\cup\{r'\}
	\]
	such that $h(r)=r'$.
\end{definition}

It is known that two affine equiangular sets are isometric if and only if their induced sets of equiangular lines are isometric; see~\cite[Lemma~2.4]{LMTY2025}.
Also, let $\aes$ be an affine equiangular set with switching root $r$ such that $\omega \cup \{r\}$ is contained in a set $V$.
Then, we say that $\aes$ is \emph{maximal} in $V$ if there is no affine equiangular set in $V$ with respect to $r$ properly containing $\aes$.
Here, the set $V$ will be taken to be a finite set of vectors, a lattice, or a Euclidean space.
If $V$ is not specified, it is the $\R$-linear span of $\aes\cup\{r\}$.
In the case where $V$ is this $\R$-linear span, the induced set of equiangular lines is maximal in the $\R$-linear span of these lines.
Similarly, we say that $\aes$ is \emph{maximum} in $V$ if there is no affine equiangular set $\omega'$ in $V$ with respect to $r$ such that $|\omega'| > |\omega|$.

\begin{definition}
	An affine equiangular set $\aes$ with respect to a switching root $r$ is said to be \emph{strongly maximal},
	if there is no affine equiangular set with respect to $r$ properly containing $\aes$ even if the dimension of the underlying space is increased.
\end{definition}

We see that an affine equiangular set $\aes$ with norm $s$ with respect to a switching root $r$ is strongly maximal
if and only if the induced set of equiangular lines is strongly maximal, that is, there is no set of equiangular lines properly containing it.
%Also, we have the following criterion for strong maximality.

%\begin{lemma}[{\cite[Lemma~2.8]{LMTY2025}}]\label{lem:strongly_maximality_criterion}
%	Let $\aes$ be an affine equiangular set with norm $s$ with respect to $r$.
%	Then $\aes$ is strongly maximal if and only if there is no vector
%	$v\in \langle \aes\cup\{r\}\rangle^{*}$
%	such that $(v,v)\le s$, $(v,r)=1$, and $(v,u)\in\{0,1\}$ for all $u\in\aes$.
%\end{lemma}

\section{An integral overlattice of $A_5^3A_1^4$}	\label{sec:A}

%We found sets of $57$ equiangular lines in dimension $18$ in some integral overlattice of $A_5^3A_1^4$. 
In this section, we introduce notation for the vectors in
an integral overlattice of $A_5^3A_1^4$,
and show that every affine equiangular set in that lattice
consists of vectors
%$4$ subsets of cardinality $36$, $7$, $7$ and $7$ 
in four cosets of $A_5^3A_1^4$ in the overlattice.
After this section, we further investigate the vectors in these cosets in terms of combinatorial objects.
From now on, we set $\Lambda=A_5^3A_1^4$.

\subsection{An integral overlattice $L$}

We provide a large number of affine equiangular sets of cardinality $57$ in an integral overlattice $L$ of $\Lambda$, whose discriminant is $6$.
The lattice $L$ is defined to be the integral overlattice of $\Lambda$ generated by
\begin{align*}
	\beta_0 & =(\alpha_5,\alpha_5,\alpha_5,0,0,0,\alpha_1) \in \R^{3 \cdot 6 + 4 \cdot 2},  \\
	\beta_1 & =(3\alpha_5,0,0,0,\alpha_1,\alpha_1,\alpha_1) \in \R^{3 \cdot 6 + 4 \cdot 2}, \\
	\beta_2 & =(0,3\alpha_5,0,\alpha_1,0,\alpha_1,\alpha_1) \in \R^{3 \cdot 6 + 4 \cdot 2}, \\
	\beta_3 & =(0,0,3\alpha_5,\alpha_1,\alpha_1,0,\alpha_1) \in \R^{3 \cdot 6 + 4 \cdot 2}.
\end{align*}
Here, note that $\beta_3$ is not necessary as a generator of $L$
since
\begin{align}\label{beta3}
	\beta_3 \equiv 3\beta_0 - \beta_1 - \beta_2 \pmod{\Lambda}.
\end{align}
We consider affine equiangular sets with norm $3$ with respect to the switching root
\[\sr :=(0,0,0,0,0,0,2\alpha_1).\]
Then such an affine equiangular set is contained in the set
\[X:=\{x\in L\mid (x,x)=3,\;(\sr ,x)=1\}.\]

\subsection{Cosets in $L/\Lambda$}	\label{subsec:cosets}

Next, we study the intersections of $X$ with the cosets in $L/\Lambda$.
First, we determine complete representatives of the cosets.
After that we consider a decomposition of affine equiangular sets.

\begin{lemma}	\label{lem:coset}
	The quotient group $L/\Lambda$ is isomorphic to the direct product of the cyclic subgroups generated by $\beta_0+\Lambda$, $\beta_1+\Lambda$ and $\beta_2+\Lambda$,
	which are of order 6,2 and 2, respectively.
\end{lemma}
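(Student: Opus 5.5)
The plan is to embed $L/\Lambda$ into $\Lambda^*/\Lambda = (A_5^*/A_5)^3 \times (A_1^*/A_1)^4 \cong (\Z/6\Z)^3\times(\Z/2\Z)^4$. Here the $A_5$-components are generated by $\alpha_5+A_5$, which has order $6$, and the $A_1$-components by $\alpha_1+A_1$, which has order $2$. The three classes then become explicit coordinate vectors:
\[
\beta_0+\Lambda\mapsto(1,1,1;0,0,0,1),\quad
\beta_1+\Lambda\mapsto(3,0,0;0,1,1,1),\quad
\beta_2+\Lambda\mapsto(0,3,0;1,0,1,1).
\]
Since $L$ is generated by $\Lambda$ together with $\beta_0,\beta_1,\beta_2$ (the class of $\beta_3$ being redundant by \eqref{beta3}), $L/\Lambda$ is the subgroup of this finite abelian group generated by these three elements. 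This is a subgroup of $\Lambda^*/\Lambda$ because $L$ is integral and therefore $L\subset L^*\subset\Lambda^*$.

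First I would read off the orders from the coordinate vectors. The element $\beta_0+\Lambda$ has order $6$, because its first coordinate $1\in\Z/6\Z$ has order $6$. The elements $\beta_1+\Lambda$ and $\beta_2+\Lambda$ have order $2$, because $2\cdot 3\equiv 0 \pmod 6$ and all remaining entries lie in $\Z/2\Z$, while the classes themselves are nonzero.

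Next I would prove directness: the map $\Z/6\times\Z/2\times\Z/2\to L/\Lambda$, $(a,b,c)\mapsto a\beta_0+b\beta_1+c\beta_2+\Lambda$, is well defined by the orders just computed, and I would show it is injective. Suppose $a\beta_0+b\beta_1+c\beta_2\in\Lambda$. The fourth coordinate (the first $A_1$) is $c$, so $c\equiv 0 \pmod 2$. The fifth coordinate is $b$, so $b\equiv 0 \pmod 2$. With $b=c=0$, the first coordinate $a\in\Z/6$ must vanish, so $a\equiv 0 \pmod 6$. Hence the map is an isomorphism onto $L/\Lambda$, which gives $|L/\Lambda|=24$ and the claimed decomposition.

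No step is a real obstacle; the only care needed is justifying that $\alpha_5$ has order exactly $6$ modulo $A_5$, which holds since $k\alpha_5\in\Z^6$ only when $6\mid k$. As a consistency check, $\disc\Lambda=6^3\cdot2^4$, and $\disc L=\disc\Lambda/|L/\Lambda|^2=3456/576=6$, matching the stated discriminant.
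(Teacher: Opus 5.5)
Your proof is correct and follows essentially the same route as the paper: write $a\beta_0+b\beta_1+c\beta_2$ componentwise, then read off $b\equiv c\equiv 0 \pmod 2$ from the $A_1$-components and $a\equiv 0 \pmod 6$ from an $A_5$-component. The only cosmetic difference is that the paper gets $a\equiv 0\pmod 6$ directly from the third component $a\alpha_5$, while you use the first component after setting $b=c=0$. Your explicit surjectivity remark (via the redundancy of $\beta_3$) and the discriminant check are fine additions.
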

\begin{proof}
	By the definitions of $\beta_0$, $\beta_1$ and $\beta_2$, the orders of the cosets are $6$, $2$ and $2$, respectively.
	Thus, it suffices to show that these cosets satisfy no nontrivial relations.
	Indeed, for integers $a,b,c$, suppose $a \beta_0 + b \beta_1 + c \beta_2 \in \Lambda$.
	We have
	\begin{align}\label{lem:coset:1}
		a \beta_0 + b \beta_1 + c \beta_2
		= \left( (a + 3b)\alpha_5, (a + 3c)\alpha_5, a\alpha_5, c\alpha_1, b\alpha_1, (b+c)\alpha_1, (a+b+c)\alpha_1 \right).
	\end{align}
	Since the third, fourth and fifth components are in $A_5$, $A_1$ and $A_1$, respectively, we have $a \equiv 0 \pmod 6$, $c \equiv 0 \pmod 2$ and $b \equiv 0 \pmod 2$.
	This means that the cosets of $\beta_0$, $\beta_1$ and $\beta_2$ have no nontrivial relations.
\end{proof}

From this lemma, we can determine the intersections of $X$ with the cosets in $L/\Lambda$.
We prepare some notation first.
For $i_1,i_2,i_3\in[6]$, let
\[x_{i_1,i_2,i_3}=(\alpha_5^{(i_1)},\alpha_5^{(i_2)},\alpha_5^{(i_3)},0,0,0,\alpha_1)
	\in \beta_0 + \Lambda.\]
For $I\in\binom{[6]}{3}$ and $\varepsilon_1,\varepsilon_2\in\{\pm1\}$, let
\begin{align*}
	x^{(1)}_{I,\varepsilon_1,\varepsilon_2} & =\left( \alpha_I,0,0,\ 0,\varepsilon_1\alpha_1,\varepsilon_2\alpha_1,\ \alpha_1 \right) \in \beta_1 + \Lambda, \\
	x^{(2)}_{I,\varepsilon_1,\varepsilon_2} & =\left( 0,\alpha_I,0,\ \varepsilon_2\alpha_1,0,\varepsilon_1\alpha_1,\ \alpha_1 \right) \in \beta_2 + \Lambda, \\
	x^{(3)}_{I,\varepsilon_1,\varepsilon_2} & =\left( 0,0,\alpha_I,\ \varepsilon_1\alpha_1,\varepsilon_2\alpha_1,0,\ \alpha_1 \right) \in \beta_3 + \Lambda,
\end{align*}
where $\alpha_I$ is defined in \eqref{A5mv_def}.

\begin{lemma}	\label{lem:cosetX}
	For $\beta + \Lambda \in L/\Lambda$, we have
	\begin{align}\label{lem:cosetX:2}
		X \cap (\beta+\Lambda) = \begin{cases}
			                         \{ x_{i_1,i_2,i_3} \mid i_1,i_2,i_3 \in [6] \}                                                                       & \text{ if } \beta+\Lambda = \beta_0 + \Lambda,                  \\
			                         \{ \sr - x_{i_1,i_2,i_3} \mid i_1,i_2,i_3 \in [6] \}                                                                 & \text{ if } \beta+\Lambda = -\beta_0 + \Lambda,                 \\
			                         \{ x^{(i)}_{I,\varepsilon_1,\varepsilon_2} \mid I \in \binom{[6]}{3},\; \varepsilon_1,\varepsilon_2 \in \{\pm 1\} \} & \text{ if } \beta+\Lambda = \beta_i + \Lambda, \quad (i=1,2,3), \\
			                         \emptyset                                                                                                            & \text{ otherwise. }
		                         \end{cases}
	\end{align}
\end{lemma}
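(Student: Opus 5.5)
By Lemma~\ref{lem:coset}, every coset of $\Lambda$ in $L$ can be written uniquely as $a\beta_0+b\beta_1+c\beta_2+\Lambda$ with $a\in\Z/6\Z$ and $b,c\in\Z/2\Z$. Formula \eqref{lem:coset:1} describes each such coset componentwise, as a product of cosets of $A_5$ (three factors) and $A_1$ (four factors). Hence a vector $x\in(\beta+\Lambda)$ decomposes as $x=(y_1,y_2,y_3,z_1,z_2,z_3,z_4)$, where each component ranges independently over the corresponding coset. The plan is to apply the two defining constraints of $X$ componentwise, using the minimum norms of Lemma~\ref{lem:minA}, listed in \eqref{A5min}--\eqref{A1min}.

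First, the constraint $(\sr,x)=1$ reads $(2\alpha_1,z_4)=1$. Since $z_4\in(a+b+c)\alpha_1+A_1$, this forces $z_4\in\alpha_1+A_1$, that is, $a+b+c$ must be odd. I will write $z_4=\alpha_1+t(e_1-e_2)$ with $t\in\Z$, using $A_1=\{(t,-t)\}$. Then $(2\alpha_1,z_4)=1+2t$, which forces $t=0$, so $z_4=\alpha_1$ with norm $1/2$. The remaining six components must therefore have total norm $5/2$.

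Next I go through the cases by $a$, using $a+b+c\equiv1\pmod 2$.
\begin{enumerate}
\item If $a\equiv\pm1$, then $y_3$ has norm at least $5/6$. The components $y_1,y_2$ lie in $(a+3b)\alpha_5+A_5$ and $(a+3c)\alpha_5+A_5$; since $a+3b$ and $a+3c$ are odd, each has norm at least $5/6$ (which occurs only when the coset is $\pm\alpha_5$) or at least $3/2$.
\begin{enumerate}
\item If $b=c=0$, the total is at least $3\cdot 5/6=5/2$. Equality is forced, so the $A_1$ components $z_1,z_2,z_3$ vanish and each $y_j$ is a minimal vector. By \eqref{A5mv} this gives $x_{i_1,i_2,i_3}$ when $a=1$. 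When $a=-1$ it gives the negatives of those minimal vectors, and I would check that these are exactly the vectors $\sr-x_{i_1,i_2,i_3}$.
\item If $b$ or $c$ is nonzero, then one of $y_1,y_2$ lies in $3\alpha_5+A_5$ and some $z_i$ is in $\alpha_1+A_1$. The norm is then at least $5/6+3/2+\cdots>5/2$, which is impossible.
\end{enumerate}
\item If $a\equiv3$, then $b+c$ is even. For $b=c=0$, all three $y_j$ lie in $3\alpha_5+A_5$, giving norm at least $9/2>5/2$. For $b=c=1$, the three $A_5$ components lie in $0,0,3\alpha_5$ cosets. This is $\beta_3$ by \eqref{beta3}, with $z_1,z_2$ in $\alpha_1+A_1$ and $z_3$ in $A_1$. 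The minimum is $3/2+1/2+1/2=5/2$, and equality forces $y_3=\alpha_I$ by \eqref{A5mv3}, $z_1,z_2=\pm\alpha_1$ by \eqref{A1mv}, and $z_3=0$. This is the set $x^{(3)}$.
\item If $a\equiv0$, then exactly one of $b,c$ is $1$. Say $b=1$; then the $A_5$ components lie in $3\alpha_5,0,0$, and by \eqref{lem:coset:1} the $A_1$ components $z_1,z_2,z_3$ lie in $0,\alpha_1,\alpha_1$. The same equality argument gives $x^{(1)}$. The case $c=1$ gives $x^{(2)}$ symmetrically.
\item If $a\equiv\pm2$, then $a+b+c$ odd forces $b+c$ odd. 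One of $y_1,y_2$ is then in an odd coset and the other in an even coset, while $y_3\in\pm2\alpha_5+A_5$. This gives norm at least $4/3+5/6+4/3+1/2>5/2$, so $X$ meets these cosets trivially.
\end{enumerate}

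The bookkeeping of parities in these cases is routine. The main obstacle is the sign check in case (i)(a) for $a=-1$, namely showing that the negated minimal vectors, combined with $z_4=\alpha_1$, are exactly the vectors $\sr-x_{i_1,i_2,i_3}$. This holds because $\sr-x$ has last component $2\alpha_1-\alpha_1=\alpha_1$. Finally, I would verify that each listed vector does have norm $3$ and inner product $1$ with $\sr$, which is immediate from \eqref{A5min}, \eqref{A5min3}, and \eqref{A1min}.
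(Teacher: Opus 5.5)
Your proof is correct, and it takes essentially the paper's approach. Both decompose componentwise over $A_5^3A_1^4$, bound the norm below by the sum of the coset minima from Lemma~\ref{lem:minA}, and read off the vectors from the equality case via \eqref{A5mv}, \eqref{A5mv3} and \eqref{A1mv}. The paper splits cases by $(b,c)$ rather than by $a$. Your explicit derivation of $z_4=\alpha_1$ from $(\sr,x)=1$ spells out a step the paper leaves implicit when it writes $y=x+(\gamma_1,\dots,\gamma_6,0)$.

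There is one parity slip to fix, in case (i)(b). For odd $a$, the condition that $a+b+c$ is odd forces $b=c=1$. Then $a+3b=a+3c=a+3\equiv\pm2 \pmod 6$ is \emph{even}. So your claim that $a+3b$ and $a+3c$ are odd holds only when $b=c=0$, and it is false that one of $y_1,y_2$ lies in $3\alpha_5+A_5$.

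The conclusion still holds with the corrected bound. By \eqref{A5min2}, $y_1$ and $y_2$ each have norm at least $4/3$, and $y_3$ has norm at least $5/6$. Since $z_1,z_2\in\alpha_1+A_1$, they contribute at least $1/2$ each. The total is at least $9/2>5/2$, so these cosets meet $X$ trivially.
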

\begin{proof}
	By Lemma~\ref{lem:coset}, we may assume
	\[\beta=a\beta_0+b\beta_1+c\beta_2,\]
	with
	\[a\in\{0,1,\dots,5\},\quad b,c\in\{0,1\}.\]
	Assume that $X\cap(\beta+\Lambda)\neq\emptyset$.
	The coset $\beta+\Lambda$ contains a representative
	\[x = (x_1\alpha_5,x_2\alpha_5,x_3\alpha_5,x_4\alpha_1,x_5\alpha_1,x_6\alpha_1,x_7\alpha_1) \in \beta+\Lambda\]
	%with $x + \Lambda = \beta + \Lambda$
	such that $x_1,\dots,x_7$ are integers satisfying
	\begin{align}
		x_1   & \equiv a + 3b \bmod 6,
		      &                                 & x_2 \equiv a + 3c \bmod 6,\label{x12} \\
		x_3   & \equiv a \bmod 6,
		      &                                 & x_4 \equiv c \bmod 2,\label{x34}      \\
		x_5   & \equiv b \bmod 2,
		      &                                 & x_6 \equiv b + c \bmod 2,\label{x56}  \\
		1=x_7 & \equiv a+b+c \bmod 2,\label{x7}
	\end{align}
	by \eqref{lem:coset:1}.

	Suppose there exists $y\in X\cap(\beta+\Lambda)$.
	Then
	\begin{equation}\label{yinx+Lambda}
		y=x+(\gamma_1,\gamma_2,\dots,\gamma_6,0)
	\end{equation}
	for some $\gamma_1,\gamma_2,\gamma_3\in A_5$ and
	$\gamma_4,\gamma_5,\gamma_6\in A_1$.
	Now
	\begin{align}
		3 & =(y,y)
		\nexteq
		\sum_{i=1}^3(x_i\alpha_5+\gamma_i,x_i\alpha_5+\gamma_i)+
		\sum_{i=4}^6(x_i\alpha_1+\gamma_i,x_i\alpha_1+\gamma_i)+\frac12
		\\&\geq
		%\min(\beta+\Lambda) 
		%\\&=\min(x+\Lambda)
		%\nexteq
		\sum_{i=1}^3\min(x_i\alpha_5+A_5)+
		\sum_{i=4}^6\min(x_i\alpha_1+A_1)+\frac12
		\label{3=}
		\nexteq
		\min((a+3b)\alpha_5+A_5)+\min((a+3c)\alpha_5+A_5)+\min(a\alpha_5+A_5)
		\\&\quad+
		\min(c\alpha_1+A_1)+\min(b\alpha_1+A_1)+\min((b+c)\alpha_1+A_1)+\frac12
		\nexteq
		\min((a+3b)\alpha_5+A_5)+\min((a+3c)\alpha_5+A_5)+\min(a\alpha_5+A_5)
		\\&\quad+
		\frac12(\delta_{c,1}+\delta_{b,1}+\delta_{b,1-c}+1)
		\nexteq
		\begin{cases}
			3\min(a\alpha_5+A_5)+\frac12                         & \text{if $(b,c)=(0,0)$,} \\
			2\min((a+3)\alpha_5+A_5)+\min(a\alpha_5+A_5)+\frac32 & \text{if $(b,c)=(1,1)$,} \\
			\min((a+3)\alpha_5+A_5)+2\min(a\alpha_5+A_5)+\frac32 & \text{otherwise.}
		\end{cases}
	\end{align}

	If $(b,c)=(0,0)$, then $\min(a\alpha_5+A_5)\leq\frac56$.
	By \eqref{A5min2} and \eqref{A5min3}, we have $a\notin\{2,3,4\}$.
	Then by \eqref{x7}, we have $a\neq0$, so $a\in\{1,5\}$, and
	equality holds in \eqref{3=}.

	If $(b,c)=(1,1)$, then
	\[\frac32\geq2\min((a+3)\alpha_5+A_5)+\min(a\alpha_5+A_5).\]
	By \eqref{A5min}, \eqref{A5min2} and \eqref{A5min3}, we conclude
	$a=3$, and
	equality holds in \eqref{3=}.

	If $(b,c)=(1,0)$ or $(0,1)$, then
	\[\frac32\geq\min((a+3)\alpha_5+A_5)+2\min(a\alpha_5+A_5).\]
	By \eqref{A5min}, \eqref{A5min2} and \eqref{A5min3}, we conclude
	$a=0$, and
	equality holds in \eqref{3=}.

	Therefore,
	\begin{align*}
		\beta+\Lambda & \in\{\beta_0+\Lambda,5\beta_0+\Lambda,
		3\beta_0+\beta_1+\beta_2+\Lambda,\beta_1+\Lambda,\beta_2+\Lambda\}
		\nexteq
		\{\beta_0+\Lambda,-\beta_0+\Lambda,
		\beta_3+\Lambda,\beta_1+\Lambda,\beta_2+\Lambda\}.
	\end{align*}

	Next, we determine the elements of $X\cap(\beta+\Lambda)$ for the above five cases.
	Since we have shown that equality holds in \eqref{3=} in all the cases,
	an arbitrary vector $y \in X\cap(\beta+\Lambda)$
	of the form \eqref{yinx+Lambda}
	must satisfy
	\begin{align*}
		(x_1\alpha_5+\gamma_1,x_1\alpha_5+\gamma_1) & =
		\min((a+3b)\alpha_5+A_5),                       \\
		(x_2\alpha_5+\gamma_2,x_2\alpha_5+\gamma_2) & =
		\min((a+3c)\alpha_5+A_5),                       \\
		(x_3\alpha_5+\gamma_3,x_3\alpha_5+\gamma_3) & =
		\min(a\alpha_5+A_5),                            \\
		(x_4\alpha_1+\gamma_4,x_4\alpha_1+\gamma_4) & =
		\min(c\alpha_1+A_1),                            \\
		(x_5\alpha_1+\gamma_5,x_5\alpha_1+\gamma_5) & =
		\min(b\alpha_1+A_1),                            \\
		(x_6\alpha_1+\gamma_6,x_6\alpha_1+\gamma_6) & =
		\min((b+c)\alpha_1+A_1).
	\end{align*}
	From \eqref{A5mv}, \eqref{A5mv3} and \eqref{A1mv},
	we see that $X\cap(\beta+\Lambda)$ are as given
	in the statement.
\end{proof}

By Lemma~\ref{lem:cosetX}, we have
\begin{align*}
	|X\cap(\pm\beta_0+\Lambda)| & =6^3=216,                                    \\
	|X\cap(\beta_i+\Lambda)|    & =\binom{6}{3}\times4=80\quad(i\in\{1,2,3\}).
\end{align*}
Therefore, $X$ consists of $2\cdot216+3\cdot80=672$ vectors.
In the remainder of this subsection,
we investigate how an affine equiangular set decomposes into its intersection with the five cosets in Lemma~\ref{lem:cosetX}.

\begin{lemma}	\label{lem:crossinner}
	Let $0 \leq i < j \leq 3$.
	If $x \in X \cap (\beta_i + \Lambda)$ and $y \in X \cap (\beta_j + \Lambda)$,
	then $(x,y) \in \{0,1\}$.
\end{lemma}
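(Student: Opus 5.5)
By Lemma~\ref{lem:cosetX}, $X\cap(\beta_0+\Lambda)$ consists of the vectors $x_{i_1,i_2,i_3}$ and $X\cap(\beta_i+\Lambda)$ ($i=1,2,3$) consists of the vectors $x^{(i)}_{I,\varepsilon_1,\varepsilon_2}$. So the plan is a direct computation of inner products, block by block in the decomposition $\R^{3\cdot6+4\cdot2}$. In every case the last $A_1$-component contributes $(\alpha_1,\alpha_1)=\frac12$, so it suffices to show that the remaining components contribute $-\frac12$ or $\frac12$. I split into two cases: $i=0$, and $1\le i<j\le 3$.

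\emph{Case $i=0$, $j\in\{1,2,3\}$.} Take $x=x_{i_1,i_2,i_3}$ and $y=x^{(j)}_{I,\varepsilon_1,\varepsilon_2}$. The vector $x$ has zeros in the first three $A_1$-slots. The vector $y$ is nonzero in exactly one $A_5$-slot, namely slot $j$, where it equals $\alpha_I$. Hence
\[
(x,y)=(\alpha_5^{(i_j)},\alpha_I)+\tfrac12 .
\]
The vector $\alpha_5^{(k)}$ has entry $-\frac56$ at position $k$ and $\frac16$ elsewhere. The vector $\alpha_I$ has entries $\pm\frac12$ whose sum is $0$. Therefore
\[
(\alpha_5^{(k)},\alpha_I)=\tfrac16\sum_{l}(\alpha_I)_l-(\alpha_I)_k=-(\alpha_I)_k=\pm\tfrac12 ,
\]
and so $(x,y)\in\{0,1\}$.

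\emph{Case $1\le i<j\le3$.} Here $x^{(i)}$ and $x^{(j)}$ have their $\alpha_I$-parts in different $A_5$-slots, so the $A_5$ blocks contribute nothing. Among the first three $A_1$-slots, $x^{(i)}$ vanishes only in slot $i$ and $x^{(j)}$ only in slot $j$. They therefore share exactly one slot, the one indexed by $\{1,2,3\}\setminus\{i,j\}$, where both are $\pm\alpha_1$. That slot contributes $\pm\frac12$, giving $(x,y)=\pm\frac12+\frac12\in\{0,1\}$.

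There is no real obstacle. The only point needing care is checking, from the explicit definitions of $x^{(1)},x^{(2)},x^{(3)}$, that each pair overlaps in exactly one of the first three $A_1$-slots. One must also verify the identity $(\alpha_5^{(k)},\alpha_I)=-(\alpha_I)_k$, which uses that the coordinates of $\alpha_I$ sum to zero.
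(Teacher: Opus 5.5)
Your proof is correct and follows the paper's own argument: both use Lemma~\ref{lem:cosetX} to write the vectors explicitly and compute the inner products directly, in the same two cases. For $i=0$ the inner product is $(\alpha_5^{(i_j)},\alpha_I)+\frac12=-(\alpha_I)_{i_j}+\frac12$, which is the paper's step $(\alpha_5^{(i_j)},\alpha_I)=(-e_{i_j},\alpha_I)$, and for $1\le i<j\le 3$ only the single shared $A_1$-slot contributes $\pm\frac12$ on top of the final $\frac12$.
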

\begin{proof}
	There exists $I \in \binom{[6]}{3}$ and $\varepsilon_1,\varepsilon_2 \in \{\pm 1\}$ such that $y = x^{(j)}_{I,\varepsilon_1,\varepsilon_2}$ by Lemma~\ref{lem:cosetX}.
	Assume $i \in \{1,2,3\}$.
	Then, by Lemma~\ref{lem:cosetX}, there exist $I', \varepsilon_1', \varepsilon_2'$ such that $x = x^{(i)}_{I',\varepsilon_1',\varepsilon_2'}$.
	We have
	\begin{align*}
		(x,y)
		= (x^{(i)}_{I',\varepsilon_1',\varepsilon_2'}, x^{(j)}_{I,\varepsilon_1,\varepsilon_2})
		\in \left\{ \frac{1}{2}(\varepsilon_1 \varepsilon_2'+1),\frac{1}{2}(\varepsilon_1' \varepsilon_2+1) \right\}\subseteq \{0,1\}.
	\end{align*}
	Next, assume that $i=0$.
	Then, by Lemma~\ref{lem:cosetX} again, we have $x=x_{i_1,i_2,i_3}$ for some $i_1,i_2,i_3 \in [6]$.
	Hence,
	\begin{align*}
		(x,y)
		= (x_{i_1,i_2,i_3}, x^{(j)}_{I,\varepsilon_1,\varepsilon_2})
		= (\alpha_5^{(i_j)}, \alpha_I) + \frac{1}{2}
		= (-e_{i_j}, \alpha_I) + \frac{1}{2}
		= \begin{cases}
			  1 & \text{if } i_j \in I, \\
			  0 & \text{otherwise.}
		  \end{cases}
	\end{align*}
	Therefore, we see that $(x,y) \in \{0,1\}$ as desired.
\end{proof}

Let $\cX$ be the set of affine equiangular sets with respect to the switching root $\sr$ in $X$.
For $i \in \{0,1,2,3\}$, let $\cX_i$ be the set of affine equiangular sets with respect to the switching root $\sr$ in $X \cap (\beta_i+\Lambda)$.
As shown in the following lemma, we may decompose affine equiangular sets.
Additionally, we do not need to consider the coset $-\beta_0 + \Lambda$ when we study $\cX$ up to switching equivalence.

\begin{lemma}	\label{lem:decompX}
	If $\omega \in \cX$,
	then $\omega$ is switching equivalent to the disjoint union $\bigsqcup_{i=0}^3 \omega_i$ for some $\omega_i \in \cX_i$ ($i=0,1,2,3$).
	Conversely,
	if $\omega_i \in \cX_i$ for $i=0,1,2,3$,
	then $\bigsqcup_{i=0}^3 \omega_i \in \cX$.
\end{lemma}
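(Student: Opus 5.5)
We prove the first assertion by switching every vector of $\omega$ that lies in $-\beta_0+\Lambda$, and the converse by Lemma~\ref{lem:crossinner}.

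\textbf{First assertion.} Let $\omega\in\cX$. By Lemma~\ref{lem:cosetX}, every vector of $X$ lies in exactly one of the five cosets
\[
\beta_0+\Lambda,\quad -\beta_0+\Lambda,\quad \beta_1+\Lambda,\quad \beta_2+\Lambda,\quad \beta_3+\Lambda .
\]
Define $g:\omega\to X$ by $g(u)=\sr-u$ if $u\in-\beta_0+\Lambda$, and $g(u)=u$ otherwise.

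\emph{Image of $g$.} If $u\in X\cap(-\beta_0+\Lambda)$, then Lemma~\ref{lem:cosetX} gives $u=\sr-x_{i_1,i_2,i_3}$, so $g(u)=x_{i_1,i_2,i_3}\in X\cap(\beta_0+\Lambda)$. The other vectors are fixed. Hence every element of $g(\omega)$ lies in one of the four cosets $\beta_i+\Lambda$ with $i\in\{0,1,2,3\}$.

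\emph{$g(\omega)$ is an affine equiangular set.} Switching preserves the defining conditions with respect to $\sr$:
\[
(\sr-u,\sr)=2-1=1,\qquad (\sr-u,\sr-u)=2-2+3=3,
\]
and for $v\neq u$ in $\omega$,
\[
(\sr-u,v)=1-(u,v)\in\{0,1\},\qquad (\sr-u,\sr-v)=2-1-1+(u,v)=(u,v)\in\{0,1\}.
\]
So $g(\omega)$ is an affine equiangular set with respect to $\sr$, and it is switching equivalent to $\omega$.

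\emph{Injectivity of $g$.} Suppose $u\neq v$ with $g(u)=g(v)$. Then one of them, say $v$, equals $\sr-u$. But $(u,\sr-u)=1-3=-2\notin\{0,1\}$, which is impossible for two elements of $\omega$. So $g$ is injective and $|g(\omega)|=|\omega|$.

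\emph{Decomposition.} Put $\omega_i=g(\omega)\cap(\beta_i+\Lambda)$ for $i=0,1,2,3$. Each $\omega_i$ is a subset of an affine equiangular set, hence itself lies in $\cX_i$. These sets are disjoint because they sit in distinct cosets, and their union is $g(\omega)$. Thus $\omega\sim_{\sw}\bigsqcup_{i=0}^3\omega_i$.

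\textbf{Converse.} Let $\omega_i\in\cX_i$ for $i=0,1,2,3$.
\begin{itemize}
\item Every vector of every $\omega_i$ lies in $X$, so it has norm $3$ and inner product $1$ with $\sr$.
\item Two distinct vectors in the same $\omega_i$ have inner product in $\{0,1\}$ by definition of $\cX_i$.
\item Two vectors from different $\omega_i,\omega_j$ lie in different cosets, so they are distinct and their inner product is in $\{0,1\}$ by Lemma~\ref{lem:crossinner}.
\end{itemize}
Hence $\bigsqcup_{i=0}^3\omega_i$ is an affine equiangular set with respect to $\sr$ contained in $X$, that is, it lies in $\cX$.

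The only step needing care is the injectivity of $g$, i.e.\ that an affine equiangular set never contains both $u$ and $\sr-u$; the rest is direct verification.
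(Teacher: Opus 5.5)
Your proposal is correct and follows the paper's proof: you replace each vector of $\omega$ in $-\beta_0+\Lambda$ by $\rho - x$, intersect with the four cosets $\beta_i+\Lambda$, and derive the converse from Lemma~\ref{lem:crossinner}. You also prove two things the paper's terse proof takes for granted: that switching preserves the defining conditions, and that $\omega$ cannot contain both $u$ and $\rho-u$ (their inner product would be $-2$), so the switching map is a bijection.
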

\begin{proof}
	For the first assertion, let $\omega \in \cX$.
	By Lemma~\ref{lem:cosetX}, we have
	\begin{align*}
		\omega = \left( \omega \cap (-\beta_0 + \Lambda) \right) \cup \bigcup_{i=0}^3 \left( \omega \cap (\beta_i + \Lambda) \right).
	\end{align*}
	We define
	\begin{align*}
		\omega_0 & := \left\{ \sr - x \mid x \in \omega \cap (-\beta_0 + \Lambda) \right\} \cup \left( \omega \cap (\beta_0 + \Lambda) \right) \subset X \cap (\beta_0 + \Lambda), \\
		\omega_i & := \omega \cap (\beta_i + \Lambda) \quad (i=1,2,3).
	\end{align*}
	Then, $\bigcup_{i=0}^3 \omega_i$ is switching equivalent to $\omega$ with respect to the switching root $\sr$.
	Hence, this is the desired decomposition.
	The second assertion follows from Lemma~\ref{lem:crossinner}.
\end{proof}

\section{Affine equiangular sets in the coset $\beta_0+\Lambda$}	\label{sec:beta0}
In this section,
we describe maximum affine equiangular sets in $\beta_0 + \Lambda$, by showing that they are in bijective correspondence with Latin squares of order $6$.

\begin{lemma}\label{lem:beta0}
	For $i_1, j_1, i_2, j_2, i_3, j_3 \in [6]$,
	\[(x_{i_1,i_2,i_3},x_{j_1,j_2,j_3})=
		\delta_{i_1,j_1}+\delta_{i_2,j_2}+\delta_{i_3,j_3}.\]
	Furthermore, the mapping
	\begin{align*}
		\begin{array}{ccc}
			\left\{ \text{independent sets in } H(3,6) \right\} & \to     & \cX_0                                                  \\
			C                                                   & \mapsto & \left\{ x_{i_1,i_2,i_3} : (i_1,i_2,i_3) \in C \right\}
		\end{array}
	\end{align*}
	is bijective.
\end{lemma}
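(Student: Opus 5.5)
The plan is to compute the inner product coordinatewise and then read off the affine-equiangular conditions.

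For the first assertion, write $x_{i_1,i_2,i_3}=(\alpha_5^{(i_1)},\alpha_5^{(i_2)},\alpha_5^{(i_3)},0,0,0,\alpha_1)$. Then
\[(x_{i_1,i_2,i_3},x_{j_1,j_2,j_3})=\sum_{k=1}^3(\alpha_5^{(i_k)},\alpha_5^{(j_k)})+(\alpha_1,\alpha_1),\]
and it suffices to compute $(\alpha_5^{(i)},\alpha_5^{(j)})$. By the definition of $\alpha_5^{(i)}$, this vector has entry $\frac16$ in every coordinate except coordinate $i$, where it has $-\frac56$. Equivalently, $\alpha_5^{(i)}=\frac16\one-e_i$. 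Hence
\[(\alpha_5^{(i)},\alpha_5^{(j)})=\tfrac{6}{36}-\tfrac16-\tfrac16+\delta_{i,j}=\delta_{i,j}-\tfrac16.\]
Summing the three such terms gives $\sum_k\delta_{i_k,j_k}-\frac12$. Adding $(\alpha_1,\alpha_1)=\frac12$ yields the stated formula.

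For the bijection, first note from Lemma~\ref{lem:cosetX} that $X\cap(\beta_0+\Lambda)=\{x_{i_1,i_2,i_3}\}$. Every such vector has norm $3$ and satisfies $(x,\sr)=(\alpha_1,2\alpha_1)=1$. The vectors $x_{i_1,i_2,i_3}$ are pairwise distinct for distinct triples, because the $\alpha_5^{(i)}$ are distinct. So the map $C\mapsto\{x_c: c\in C\}$ is injective on arbitrary subsets of $[6]^3$, and the image of a subset $C$ is an affine equiangular set exactly when all pairwise inner products of distinct elements lie in $\{0,1\}$.

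By the formula, two distinct triples have inner product equal to their number of agreeing coordinates, which is $0$, $1$ or $2$. The value $2$ occurs exactly when the triples differ in exactly one coordinate, i.e.\ when they are adjacent in $H(3,6)$. Therefore the image of $C$ lies in $\cX_0$ if and only if $C$ is independent in $H(3,6)$. Since every member of $\cX_0$ is a subset of $X\cap(\beta_0+\Lambda)$, and hence of the form $\{x_c: c\in C\}$ for some $C$, the map is also surjective.

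No step here is a real obstacle. The only care needed is the small computation of $(\alpha_5^{(i)},\alpha_5^{(j)})$ and confirming the identification $\alpha_5^{(i)}=\frac16\one-e_i$ from the transposition definition.
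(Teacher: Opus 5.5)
Your proposal is correct and follows the same approach as the paper: the coordinatewise computation $(\alpha_5^{(i)},\alpha_5^{(j)})=\delta_{i,j}-\frac16$, then reading off that the only forbidden inner product, $2$, occurs exactly for triples adjacent in $H(3,6)$. Your account of the bijection simply spells out what the paper calls immediate, including injectivity and the use of Lemma~\ref{lem:cosetX} for surjectivity.
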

\begin{proof}
	For $i,j\in[6]$,
	$(\alpha_5^{(i)},\alpha_5^{(j)})=\delta_{ij}-\frac16$.
	Hence,
	\begin{align*}
		(x_{i_1,i_2,i_3},x_{j_1,j_2,j_3})
		= & (\alpha_5^{(i_1)},\alpha_5^{(j_1)})+(\alpha_5^{(i_2)},\alpha_5^{(j_2)})+(\alpha_5^{(i_3)},\alpha_5^{(j_3)})+(\alpha_1,\alpha_1) \\
		= & \left(\delta_{i_1,j_1}-\frac16\right)+\left(\delta_{i_2,j_2}-\frac16\right)+\left(\delta_{i_3,j_3}-\frac16\right)+\frac12       \\
		= & \delta_{i_1,j_1}+\delta_{i_2,j_2}+\delta_{i_3,j_3}.
	\end{align*}
	This is the first assertion.
	Moreover, the second assertion follows immediately from the first one.
\end{proof}

\begin{lemma}	\label{lem:Hamming}
	The independence number of the Hamming graph $H(3,n)$ is $n^2$.
	Furthermore, the mapping
	\begin{align*}
		\begin{array}{ccc}
			\left\{ \latin : \text{Latin square of order } n \right\} & \to     & \left\{ \text{maximum independent sets in } H(3,n) \right\} \\
			\latin                                                    & \mapsto & \{(i,j,\latin_{ij}) : i,j \in [n]\}
		\end{array}
	\end{align*}
	is bijective.
\end{lemma}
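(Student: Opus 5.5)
We will argue directly from the structure of $H(3,n)$, whose vertex set is $[n]^3$ with two triples adjacent exactly when they differ in exactly one coordinate. The plan has two parts: the upper bound by pigeonhole together with a construction attaining it, and then the bijection.

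For the upper bound, we partition $[n]^3$ into the $n^2$ ``lines'' $\ell_{i,j}=\{(i,j,k): k\in[n]\}$ for $i,j\in[n]$. Each line induces a clique $K_n$ in $H(3,n)$, so an independent set $C$ meets every line in at most one vertex, giving $|C|\le n^2$. For the lower bound, it suffices to show that for any Latin square $\latin$ of order $n$ the set $C_\latin=\{(i,j,\latin_{ij})\}$ is independent. Latin squares of order $n$ exist, e.g.\ $\latin_{ij}=i+j \bmod n$, so this gives $n^2$. Suppose two distinct elements $(i,j,\latin_{ij})$ and $(i',j',\latin_{i'j'})$ differ in exactly one coordinate. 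That coordinate cannot be the third, since $(i,j)=(i',j')$ would force them to be equal. If it is the first, then $j=j'$ and $\latin_{ij}=\latin_{i'j}$ with $i\ne i'$, contradicting that symbols in column $j$ are distinct. The second coordinate is symmetric, using the rows. Hence the independence number is exactly $n^2$, and the map in the statement is well defined, since $C_\latin$ has $n^2$ elements and so is maximum.

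Injectivity is immediate, because $\latin$ is recovered from $C_\latin$ as the function $(i,j)\mapsto k$. For surjectivity, let $C$ be a maximum independent set, so $|C|=n^2$. By the pigeonhole argument above, $C$ meets each line $\ell_{i,j}$ in exactly one vertex, which defines a function $\latin_{ij}$ with $C=\{(i,j,\latin_{ij})\}$. It remains to check that $\latin$ is a Latin square. If $\latin_{ij}=\latin_{ij'}$ with $j\ne j'$, the two points differ only in the second coordinate and are therefore adjacent, which is a contradiction. So each row has $n$ distinct symbols from $[n]$, i.e.\ each symbol occurs exactly once. The same argument applies to columns.

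No step here is a genuine obstacle. The only point needing care is surjectivity, where maximality has to be turned into ``exactly one point per line'' before the Latin property can be read off. That deduction comes directly from the counting equality $|C|=n^2$ combined with the bound of at most one point per line.
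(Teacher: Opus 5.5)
Your proposal is correct and follows essentially the same route as the paper: you partition $H(3,n)$ into the $n^2$ cliques $\{(i,j,k):k\in[n]\}$ to get the upper bound, use any Latin square for the matching lower bound, and read the array off a maximum independent set for surjectivity. You also write out the adjacency checks (independence of $C_\latin$ and the Latin property of the recovered array) that the paper leaves implicit.
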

\begin{proof}
	The Hamming graph $H(3,n)$ is partitioned into $n^2$ cliques $\{(i,j,k) : k \in [n]\}$ of cardinality $n$ for $i, j \in [n]$.
	Hence, the independence number of $H(3,n)$ is at most $n^2$.
	Since there exists a Latin square $S$ of order $n$, $\{(i,j,\latin_{ij}) : i,j \in [n]\}$ is an independent set of size $n^2$ in $H(3,n)$.

	Next, it follows from the definition of a Latin square that the mapping is well-defined and injective.
	It remains to show that the mapping is surjective.
	Let $I$ be an arbitrary maximum independent set of $H(3,n)$.
	Since $I$ is an independent set, $I = \{(i,j,S_{ij}) : i,j \in [n]\}$ for some array $(S_{ij})_{i,j \in [n]}$,
	and this array must be a Latin square of order $n$.
	Therefore, the mapping is surjective.
\end{proof}

Lemma~\ref{lem:Hamming} can be proved in a more general context.
Indeed, Roos~\cite[Lemma 4.1.1]{MR687726} implies that an $n$-clique in $H(3,n)$ is an antidesign with dual diameter $2$.
This implies that an independent set $Y$ of size $n^2$ in $H(3,n)$ achieves equality in~\cite[(6.4)]{MR498190}, so \cite[Theorem~6.2]{MR498190} implies that $Y$ is a $2$-design, that is, an orthogonal array of strength $2$.
It is well-known and easy to see that an orthogonal array of strength $2$ and size $n^2$
in $H(3,n)$ is equivalent to a Latin square of order $n$
via the bijection described in Lemma~\ref{lem:Hamming}.

\begin{theorem} \label{thm:beta0}
	The maximum cardinality of an affine equiangular set with respect to the switching root $\sr$ in
	$X\cap(\beta_0+\Lambda)$
	is $36$.
	Furthermore, the mapping
	\begin{align*}
		\begin{array}{rccc}
			\phi^{(0)}: & \left\{ \latin : \text{Latin square of order } 6 \right\} & \to     & \{ \omega \in \cX_0 : \omega \text{ is maximum in } \cX_0 \} \\
			            & \latin                                                    & \mapsto & \{ x_{i,j,\latin_{ij}} : i,j \in \{1,\ldots,6\} \}
		\end{array}
	\end{align*}
	is bijective.
	In addition, the mapping
	\begin{align*}
		\begin{array}{rccc}
			\bar{\phi}^{(0)}: & \left\{ \latin : \text{Latin square of order } 6 \right\} & \to     & \{ \omega \in \cX_{\pm 0} : \omega \text{ is maximum in } \cX_{\pm 0} \}/ \sim_{\sw} \\
			                  & \latin                                                    & \mapsto & [\{ x_{i,j,\latin_{ij}} : i,j \in \{1,\ldots,6\} \}],
		\end{array}
	\end{align*}
	is a bijection,
	where $\cX_{\pm 0}$ is the set of affine equiangular sets with respect to the switching root $\sr$ in $X \cap \left( (\beta_0+\Lambda) \cup (-\beta_0+\Lambda) \right)$.
\end{theorem}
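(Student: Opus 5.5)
The first two assertions follow by composing Lemma~\ref{lem:beta0} with Lemma~\ref{lem:Hamming} for $n=6$. By Lemma~\ref{lem:beta0}, $\cX_0$ is in cardinality-preserving bijection with independent sets of $H(3,6)$. Indeed, $(x_{i_1,i_2,i_3},x_{j_1,j_2,j_3})\in\{0,1\}$ exactly when the two triples agree in at most one coordinate, i.e.\ are nonadjacent in $H(3,6)$. By Lemma~\ref{lem:Hamming}, the maximum size of such a set is $36$, and maximum independent sets correspond bijectively to Latin squares of order $6$ via $\latin\mapsto\{(i,j,\latin_{ij})\}$. Composing the two bijections gives exactly $\phi^{(0)}$. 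Hence the maximum cardinality in $X\cap(\beta_0+\Lambda)$ is $36$ and $\phi^{(0)}$ is bijective.

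For $\bar\phi^{(0)}$, I first show that every $\omega\in\cX_{\pm0}$ is switching equivalent to some $\omega_0\in\cX_0$ with $|\omega_0|=|\omega|$. I replace each $x\in\omega\cap(-\beta_0+\Lambda)$ by $\sr-x$, exactly as in the proof of Lemma~\ref{lem:decompX}. This lands in $\beta_0+\Lambda$ by Lemma~\ref{lem:cosetX}. The resulting set is again an affine equiangular set, since switching a vector $u$ to $\sr-u$ preserves norm $3$, the value $(\cdot,\sr)=1$, and changes inner products $t\mapsto 1-t$ with the other vectors. A point needing care is that the replacement does not collide with a vector already in $\omega\cap(\beta_0+\Lambda)$. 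If $\sr-x=y$ with $x,y\in\omega$, then $(x,y)=(x,\sr)-(x,x)=1-3=-2\notin\{0,1\}$, a contradiction. So the map is injective and sizes are preserved. Consequently the maximum size in $\cX_{\pm0}$ is also $36$, and maximum elements of $\cX_{\pm0}$ are switching equivalent to maximum elements of $\cX_0$. This gives surjectivity of $\bar\phi^{(0)}$, using the surjectivity of $\phi^{(0)}$.

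The main point is injectivity of $\bar\phi^{(0)}$: two distinct maximum members $\omega,\omega'$ of $\cX_0$ must not be switching equivalent. Suppose $g:\omega\to\omega'$ is a bijection with $g(u)\in\{u,\sr-u\}$. Since $\omega'\subset\beta_0+\Lambda$, I need $\sr-u\notin\beta_0+\Lambda$ for $u\in\beta_0+\Lambda$. Indeed $\sr-u\in-\beta_0+\Lambda$, and $2\beta_0\notin\Lambda$ because $\beta_0+\Lambda$ has order $6$ by Lemma~\ref{lem:coset}. Hence $g(u)=u$ for all $u$, so $\omega=\omega'$. Injectivity of $\phi^{(0)}$ then gives injectivity of $\bar\phi^{(0)}$. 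I expect this coset-order check to be the only step requiring any thought; the rest is bookkeeping of the earlier lemmas.
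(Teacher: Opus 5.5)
Your proposal is correct and follows the paper's proof. The first two assertions come from composing Lemma~\ref{lem:beta0} with Lemma~\ref{lem:Hamming}. The bijectivity of $\bar{\phi}^{(0)}$ comes from the fact that exactly one of $x$ and $\sr - x$ lies in $\beta_0+\Lambda$, so $\cX_0$ is a complete set of representatives of $\cX_{\pm 0}/\sim_{\sw}$. You also make explicit two points the paper leaves implicit: that $2\beta_0\notin\Lambda$ (so ``either'' really means ``exactly one''), and that switching causes no collisions, so cardinality and hence maximality are preserved.
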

\begin{proof}
	From Lemma~\ref{lem:beta0}, maximum affine equiangular sets in $\cX_0$ bijectively correspond to maximum independent sets in $H(3,6)$.
	Since the independence number of $H(3,6)$ is $36$ by Lemma~\ref{lem:Hamming}, the maximum cardinality of an affine equiangular set in $\cX_0$ is $36$.
	Furthermore, from the bijection in Lemma~\ref{lem:Hamming}, we see that the mapping $\phi^{(0)}$ is bijective.

	Next, the switching class of $\omega \in \cX_{\pm 0}$ is written as
	\begin{align*}
		[\omega] = \{ \theta \cup \{ \rho - x : x \in \omega \setminus \theta \} : \theta \subset \omega \}.
	\end{align*}
	Since either $x$ or $\sr - x$ is in $\beta_0 + \Lambda$ for each $x \in X \cap \left( (\beta_0+\Lambda) \cup (-\beta_0+\Lambda) \right)$, we see that $\cX_0$ is a complete set of representatives of $\cX_{\pm 0}/\sim_{\sw}$.
	Therefore, the mapping $\bar{\phi}^{(0)}$ is bijective.
\end{proof}

\section{Affine equiangular sets in $\beta_i + \Lambda$ ($i=1,2,3$)}	\label{sec:beta123}
In this section, we describe affine equiangular sets in $\beta_i + \Lambda$ for $i=1,2,3$, that is, elements of $\cX_i$.
The Johnson graph $J(v,k)$ is defined to be the graph whose vertex set is $\binom{[v]}{k}$, and two vertices are adjacent if their intersection has size $k-1$.
Also, we regard $K_2$ as the complete graph with vertex set $\{\pm 1\}$.
Recall that the Hamming graph $H(2,2)$ is the cartesian product $K_2 \boxempty K_2$.
To study affine equiangular sets in $\beta_i + \Lambda$ ($i=1,2,3$), we consider the distance $2$-or-$3$ graph, denoted by $H$, of the cartesian product $J(6,3) \boxempty H(2,2)$,
and the distance $2$-or-$3$ graph, denoted by $K$, of the cartesian product $J(6,3) \boxempty K_2$.

\begin{lemma}\label{lem:J63H22}
	Let $i\in[3]$.
	Then, the mapping
	\begin{align*}
		\begin{array}{cccc}
			\phi^{(i)} : & \left\{ \text{cliques } \text{ in } H \right\} & \to     & \cX_i,                                                                                  \\
			             & C                                              & \mapsto & \{ x^{(i)}_{I,\varepsilon_1,\varepsilon_2} : (I,(\varepsilon_1,\varepsilon_2)) \in C \}
		\end{array}
	\end{align*}
	is bijective.
	In addition,
	\begin{align*}
		\begin{array}{cccc}
			\bar{\phi}^{(i)}: & \left\{ \text{cliques } \text{ in } K \right\} & \to     & \cX_i/\sim_{\sw},                                         \\
			                  & C                                              & \mapsto & [\{ x^{(i)}_{I,\varepsilon,1} : (I,\varepsilon) \in C \}]
		\end{array}
	\end{align*}
	is bijective.
\end{lemma}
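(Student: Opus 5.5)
The plan is to compute all inner products inside $X\cap(\beta_i+\Lambda)$ explicitly, as was done for $\beta_0+\Lambda$ in Lemma~\ref{lem:beta0}, and to express them in terms of the graph distance in $J(6,3)\boxempty H(2,2)$. By Lemma~\ref{lem:cosetX}, the map $(I,(\varepsilon_1,\varepsilon_2))\mapsto x^{(i)}_{I,\varepsilon_1,\varepsilon_2}$ is a bijection from the vertex set $\binom{[6]}{3}\times\{\pm1\}^2$ of $H$ onto $X\cap(\beta_i+\Lambda)$. It is injective because $\alpha_I$ determines $I$. Every vector in this set has norm $3$ and inner product $1$ with $\sr$. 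So an element of $\cX_i$ is exactly the image of a vertex subset whose vectors have pairwise inner products in $\{0,1\}$.

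\textbf{Step 1: the inner product formula.} Put $k=|I\cap J|$. The vectors $\alpha_I$ and $\alpha_J$ have entries $\pm\frac12$ and disagree in exactly $|I\triangle J|=2(3-k)$ coordinates. Hence
\[
(\alpha_I,\alpha_J)=\tfrac14\bigl(6-4(3-k)\bigr)=k-\tfrac32 .
\]
The $A_1$ components contribute $\frac12(\varepsilon_1\varepsilon_1'+\varepsilon_2\varepsilon_2')+\frac12$. Let $d_J=3-k$ be the distance in $J(6,3)$ and let $h$ be the Hamming distance between $(\varepsilon_1,\varepsilon_2)$ and $(\varepsilon_1',\varepsilon_2')$, so that $\frac12(\varepsilon_1\varepsilon_1'+\varepsilon_2\varepsilon_2')=1-h$. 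Combining these gives
\[
\bigl(x^{(i)}_{I,\varepsilon_1,\varepsilon_2},x^{(i)}_{J,\varepsilon_1',\varepsilon_2'}\bigr)=3-(d_J+h)=3-d,
\]
where $d$ is the distance in the cartesian product $J(6,3)\boxempty H(2,2)$; recall that distance in a cartesian product is additive. For distinct vertices $d\in\{1,\dots,5\}$, so the inner product lies in $\{0,1\}$ exactly when $d\in\{2,3\}$, that is, exactly when the two vertices are adjacent in $H$. Therefore a vertex set $C$ maps into $\cX_i$ if and only if $C$ is a clique in $H$. Together with injectivity of the vertex-to-vector map, this shows that $\phi^{(i)}$ is well defined and bijective.

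\textbf{Step 2: switching.} A direct check gives
\[
\sr-x^{(i)}_{I,\varepsilon_1,\varepsilon_2}=x^{(i)}_{[6]\setminus I,-\varepsilon_1,-\varepsilon_2},
\]
because $-\alpha_I=\alpha_{[6]\setminus I}$, the $A_1$ signs flip, and the last component is $2\alpha_1-\alpha_1=\alpha_1$. Hence switching preserves $X\cap(\beta_i+\Lambda)$, so every member of the switching class of some $\omega\in\cX_i$ again lies in $\cX_i$. For each $x$ in this set, exactly one of $x$ and $\sr-x$ has $\varepsilon_2=1$. So each class in $\cX_i/\sim_{\sw}$ contains a unique representative all of whose vectors have $\varepsilon_2=1$.

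\textbf{Step 3: the reduced graph.} By Step~1, such representatives correspond to cliques of $H$ contained in $\binom{[6]}{3}\times\{\pm1\}\times\{1\}$. On this subset $h$ is just the Hamming distance in the $\varepsilon_1$ coordinate. The induced subgraph of $H$ on this subset is therefore the distance $2$-or-$3$ graph of $J(6,3)\boxempty K_2$, which is $K$. This yields the bijection $\bar{\phi}^{(i)}$.

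No step is a real obstacle. The only care needed is the bookkeeping in Step~1 and the check in Step~2 that switching stays inside the coset $\beta_i+\Lambda$, so that the normalized representative is unique.
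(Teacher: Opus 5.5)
Your proposal is correct and follows essentially the same route as the paper. You compute $(x^{(i)}_{I,\varepsilon_1,\varepsilon_2},x^{(i)}_{I',\varepsilon_1',\varepsilon_2'})=3-d$ with $d$ the distance in $J(6,3)\boxempty H(2,2)$, use $\rho-x^{(i)}_{I,\varepsilon_1,\varepsilon_2}=x^{(i)}_{[6]\setminus I,-\varepsilon_1,-\varepsilon_2}$ to pick the unique $\varepsilon_2=1$ representative in each switching class, and identify the induced subgraph on those vertices with $K$; the differences from the paper are only the order of steps and your explicit check that switching stays inside $X\cap(\beta_i+\Lambda)$, which the paper leaves implicit.
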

\begin{proof}
	By Lemma~\ref{lem:cosetX}, the mapping
	\begin{align*}
		\begin{array}{cccc}
			 & \binom{[6]}{3} \times \{\pm 1\}^2 & \to     & X \cap (\beta_i+\Lambda)                \\
			 & (I,(\varepsilon_1,\varepsilon_2)) & \mapsto & x^{(i)}_{I,\varepsilon_1,\varepsilon_2}
		\end{array}
	\end{align*}
	is bijective.
	Hence, it suffices to show that the adjacency is equivalent to two vectors having inner product $0$ or $1$.
	Indeed, for
	$I,I'\in\binom{[6]}{3}$ and $\varepsilon_1,\varepsilon_1',\varepsilon_2,\varepsilon_2'\in\{\pm1\}$,
	we have
	\begin{align*}
		(x^{(i)}_{I,\varepsilon_1,\varepsilon_2},x^{(i)}_{I',\varepsilon_1',\varepsilon_2'})
		 & =|I\cap I'|-\frac32+\frac12(\varepsilon_1\varepsilon_1'+\varepsilon_2\varepsilon_2'+1)
		%\nexteq
		%|I\cap I'|+\delta_{\varepsilon_1,\varepsilon_1'}+\delta_{\varepsilon_2,\varepsilon_2'}
		\nexteq
		3-\left(d_J(I,I')+d_H((\varepsilon_1,\varepsilon_2),(\varepsilon_1',\varepsilon_2'))\right),
		\nexteq
		3 - d\big( (I,(\varepsilon_1,\varepsilon_2)), (I',(\varepsilon_1',\varepsilon_2')) \big),
	\end{align*}
	where $d_J$, $d_H$ and $d$ are the distance functions of $J(6,3)$, $H(2,2)$ and $J(6,3) \boxempty H(2,2)$, respectively.

	Next, the complete graph $K_2$ is an induced subgraph of $H(2,2)$ through the inclusion $\iota: \{\pm 1\} \to \{\pm 1\}^2$ defined by $\iota(\varepsilon) = (\varepsilon,1)$ for $\varepsilon \in \{\pm 1\}$.
	This allows us to regard $J(6,3) \boxempty K_2$ as an induced subgraph of $J(6,3) \boxempty H(2,2)$.
	Noting that the restriction of $d$ to $J(6,3) \boxempty K_2$ coincides with the distance function of $J(6,3) \boxempty K_2$,
	we see that $K$ is an induced subgraph of $H$.
	Hence, cliques in $K$ are ones in $H$, and the mapping $\bar{\phi}^{(i)}$ is well-defined.
	To prove bijectivity, we show that every switching class in $\cX_i/\sim_{\sw}$ has a unique representative consisting of vectors $x^{(i)}_{I,\varepsilon,1}$ for $I \in \binom{[6]}{3}$ and $\varepsilon \in \{\pm 1\}$.
	Indeed, fix a switching class $[\omega] \in \cX_i/\sim_{\sw}$.
	This can be written as
	\begin{align*}
		[\omega] = \big\{ \theta \cup \left\{ \rho-x : x \in \omega \setminus \theta \right\} : \theta \subset \omega \big\}.
	\end{align*}
	Noting that
	\begin{align*}
		\rho - x^{(i)}_{I,\varepsilon_1,\varepsilon_2} = x^{(i)}_{[6] \setminus I,-\varepsilon_1,-\varepsilon_2}\quad (I \in \binom{[6]}{3},\; \varepsilon_1,\varepsilon_2 \in \{\pm 1\}),
	\end{align*}
	we see that $\omega$ has a unique representative consisting of vectors $x^{(i)}_{I,\varepsilon,1}$ for $I \in \binom{[6]}{3}$ and $\varepsilon \in \{\pm 1\}$.
	This means that $\bar{\phi}^{(i)}$ is bijective.
\end{proof}

We explicitly determine the maximum affine equiangular sets in
$X \cap (\beta_i + \Lambda)$.
They correspond to a Pasch configuration $\{1,3,5\}$, $\{1,4,6\}$, $\{2,3,6\}$, $\{2,4,5\}$ and three vertices up to some action.
To this end, we consider cliques in the graph $K$,
which correspond to switching classes of affine equiangular sets in $X \cap (\beta_i + \Lambda)$ by Lemma~\ref{lem:J63H22}.
Since $S_6$ and $\Sym(\{\pm1\})$ act on $J(6,3)$ and $K_2$, respectively,
we consider the cliques up to the natural action of the subgroup $S_6 \times \Sym(\{\pm1\}) \subset \Aut(K)$.
Here, $\Sym(X)$ is the symmetric group on a set $X$.

\begin{proposition}	\label{prop:J63K27clique}
	The graph $K$ has clique number $7$.
	A complete system of representatives of the action of $S_6 \times \Sym(\{\pm 1\})$ on the set of maximum cliques in $K$ is given by $c_1$ and $c_2$, each consisting of
	\begin{align}\label{prop:J63K27clique:1}
		(\{1,3,5\},-1),\ (\{1,4,6\},-1),\ (\{2,3,6\},-1),\ (\{2,4,5\},-1),
	\end{align}
	and either the three vertices
	\begin{align}\label{prop:J63K27clique:2}
		(\{1,2\} \cup \{3\},1),\ (\{3,4\} \cup \{5\},1),\ (\{5,6\} \cup \{1\},1),
	\end{align}
	or
	\begin{align}\label{prop:J63K27clique:3}
		(\{1,2\} \cup \{6\},1),\ (\{3,4\} \cup \{2\},1),\ (\{5,6\} \cup \{4\},1).
	\end{align}
	In particular, a complete system of representatives of the action of $S_6$ on the set of maximum cliques of size $7$ in $K$ is given by $\pm c_1$ and $\pm c_2$.
	Here, $-c_i := \{ (I,-\varepsilon) : (I,\varepsilon) \in c_i \}$ for $i=1,2$.
\end{proposition}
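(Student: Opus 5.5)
The plan is to translate adjacency in $K$ into a condition on triples and then do a short combinatorial classification.

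\textbf{Adjacency in $K$.} In $J(6,3)\boxempty K_2$ we have $d((I,\varepsilon),(I',\varepsilon'))=d_J(I,I')+[\varepsilon\neq\varepsilon']$, where $d_J(I,I')=3-|I\cap I'|$. Requiring this distance to be $2$ or $3$ gives:
\begin{itemize}
\item two vertices with the same sign are adjacent iff $|I\cap I'|\le 1$;
\item two vertices with opposite signs are adjacent iff $|I\cap I'|\in\{1,2\}$, i.e.\ $I'\neq I$ and $I'\neq[6]\setminus I$.
\end{itemize}
So a clique $C$ splits as $C=A\times\{-1\}\,\sqcup\,B\times\{1\}$. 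Here $A$ and $B$ are packings of triples on $[6]$ (pairwise intersections of size at most $1$), and no triple of $B$ equals a triple of $A$ or the complement of one.

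\textbf{Bounding packings.} In a packing of triples on $6$ points, each point lies in at most $\lfloor 5/2\rfloor=2$ triples. Hence a packing has at most $4$ triples. A packing of size $4$ has every point on exactly two blocks, so it is a Pasch configuration. All Pasch configurations on $[6]$ are equivalent under $S_6$: fix the three pairs of points not collinear; these form a partition into groups and the configuration is a $\TD(3,2)$. So $|C|\le 8$.

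To get a clique of size at least $7$ we need one side, say $A$ after applying $\Sym(\{\pm1\})$, to be a Pasch configuration. Using $S_6$ we may take $A$ to be the configuration in~\eqref{prop:J63K27clique:1}, with groups $\{1,2\},\{3,4\},\{5,6\}$. The complements of its blocks form the complementary Pasch configuration. These $8$ triples are exactly the triples that are transversal to the groups. Therefore $B$ must consist of triples containing one of the group pairs $\{1,2\},\{3,4\},\{5,6\}$; there are $12$ such triples. Two triples containing the same pair meet in two points, so $B$ has at most one triple per group and $|B|\le3$. This gives clique number exactly $7$, with $4+4$ impossible.

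\textbf{Classifying $B$.} Write $B=\{\{1,2,a\},\{3,4,b\},\{5,6,c\}\}$ with $a\in\{3,4,5,6\}$, $b\in\{1,2,5,6\}$, $c\in\{1,2,3,4\}$. The packing condition for $\{1,2,a\}$ and $\{3,4,b\}$ says we must not have both $a\in\{3,4\}$ and $b\in\{1,2\}$; similarly for the other two pairs.

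Record for each group which other group its extra point lies in. This defines a map $\{1,2,3\}\to\{1,2,3\}$ without fixed points, and the condition says it has no $2$-cycle. Hence it is one of the two cyclic orientations, giving $2\cdot 2^3=16$ triples $B$.

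Next, compute the stabilizer $G$ of the Pasch configuration $A$ in $S_6$. It has order $24$: it permutes the three groups as $S_3$, and it contains the even combinations of within-group swaps.

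\textbf{Main obstacle.} The delicate step is orbit counting, because elements of $G$ that act as transpositions on the groups reverse the cyclic orientation. One must check carefully that $G$ has exactly two orbits on the $16$ choices of $B$, represented by~\eqref{prop:J63K27clique:2} and~\eqref{prop:J63K27clique:3}, and that these two are not identified.

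I would do this by tracking how the parity of within-group swaps interacts with the choices $a,b,c$. For example, in~\eqref{prop:J63K27clique:2} the chosen points $3,5,1$ are the ``first'' points of the following groups, while in~\eqref{prop:J63K27clique:3} they are $6,2,4$, the ``second'' points of the preceding groups. The invariant separating the two orbits should be a parity combining the orientation with which element of each group is chosen. It must be checked directly against the generators of $G$.

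\textbf{The $\pm$ statement.} Finally, $S_6$ preserves signs, so a clique with its Pasch side at sign $+1$ is not $S_6$-equivalent to one with its Pasch side at sign $-1$. This gives the four representatives $\pm c_1,\pm c_2$ under $S_6$ alone.
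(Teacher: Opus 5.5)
Your reduction is sound, and it takes a genuinely different route from the paper, which checks everything by computer enumeration of the $960$ maximum cliques. The following steps are all correct:
\begin{itemize}
\item the adjacency rule;
\item the packing bound;
\item the identification of the $4$-side as a Pasch configuration;
\item the restriction of $B$ to the $12$ non-transversal triples, at most one per group;
\item the count of $16$ admissible $B$, giving $2\cdot 30\cdot 16=960$.
\end{itemize}
Together these prove the clique number is $7$ and explain where the number $960$ comes from.

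What is missing is the step you flag as the main obstacle. The claim that $c_1,c_2$ are a complete and irredundant system of representatives is equivalent to $G$ having exactly two orbits on the $16$ sets $B$, and you leave this unproved. The invariant you propose also points the wrong way. Encode points as (group, bit), so that $A$ is the set of even-weight transversals. Elements of $G$ inducing a transposition of the groups, such as $(13)(24)$, reverse the cyclic orientation. They do preserve the parity of the chosen points, so nothing compensates the orientation flip. Hence no $G$-invariant can involve the orientation.

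\textbf{Missing observation.} Because the extra-point map is a $3$-cycle on the groups, the three extra points $a,b,c$ always form a transversal $T$ of $\{1,2\},\{3,4\},\{5,6\}$. Moreover, $B$ is determined by its orientation together with $T$.

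\emph{The two classes are separated.} Since $G$ preserves $A$, it also preserves the complementary Pasch configuration. So whether $T\in A$ is a $G$-invariant. For $c_1$ one has $T=\{1,3,5\}\in A$. For $c_2$ one has $T=\{2,4,6\}\notin A$. So $c_1$ and $c_2$ lie in different orbits.

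\emph{Each class is one orbit.} Each class contains $2\cdot 4=8$ sets $B$ (two orientations, four blocks), and $G$ is transitive on it:
\begin{itemize}
\item the even within-group swaps fix the orientation and move $T$ regularly through the four blocks of its Pasch configuration;
\item $(13)(24)\in G$ reverses the orientation and keeps $T$ in the same configuration.
\end{itemize}

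Hence $G$ has exactly two orbits on the $16$ sets $B$, each of size $8$, with stabilizers of order $3$; for $c_1$ the stabilizer is $\langle(135)(246)\rangle$. This gives the paper's two orbits of size $480$. With this in place, your concluding $\pm$ argument is fine as written.
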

\begin{proof}
	Using a computer, we verified that the clique number of $K$ is $7$,
	and enumerated all $960$ cliques of size $7$ in $K$.
	Moreover, using a computer, we may verify that two orbits of the action of $S_6 \times \Sym(\{\pm 1\})$ of cardinality $480$ each, are represented by $c_1$ and $c_2$.
	Similarly, we may verify that four orbits of the action of $S_6$ of cardinality $240$ each, are represented by $\pm c_1$ and $\pm c_2$.
\end{proof}

We remark that one may consider in this proof orbits under $\Aut(K)$ instead of $S_6 \times \Sym(\{\pm 1\})$.
However, the resulting orbit decomposition is the same and does not simplify the classification.

\section{Affine equiangular sets in $L$}	\label{sec:main}
From the previous results, we can determine the maximum affine equiangular sets in $L$ as stated in the following theorem.
In the theorem, we use the notation for vectors in $L$ introduced in Subsection~\ref{subsec:cosets}.
After the theorem, we describe the natural actions of $S_6^3$ on these maximum affine equiangular sets, and the Seidel matrices of these sets.

\begin{theorem} \label{thm:main}
	The maximum cardinality of an affine equiangular set with respect to the switching root $\sr$ in $L$ is $57$.
	For every such maximum affine equiangular set $\omega$, there exist a unique Latin square $S$ of order $6$ and unique cliques $C_1, C_2, C_3$ of size $7$ in the distance $2$-or-$3$ graph of the cartesian product $J(6,3) \boxempty K_2$ such that $\omega$ is switching equivalent to $\omega_0 \cup \omega_1 \cup \omega_2 \cup \omega_3$ where
	\begin{align}
		\omega_0 & = \left\{ x_{i,j,S_{ij}} : i,j \in [6] \right\} \label{thm:main:omega0}
	\end{align}
	and
	\begin{align}
		\omega_i & = \left\{ x^{(i)}_{I,\varepsilon,1} : (I,\varepsilon) \in C_i \right\} \label{thm:main:omegai}
	\end{align}
	for $i=1,2,3$.
\end{theorem}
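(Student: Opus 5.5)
The plan is to assemble the result from Lemma~\ref{lem:decompX}, Theorem~\ref{thm:beta0}, Lemma~\ref{lem:J63H22} and Proposition~\ref{prop:J63K27clique}. First note that any affine equiangular set in $L$ with respect to $\sr$ and norm $3$ lies in $X$, hence belongs to $\cX$.

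\textbf{Upper bound.} Let $\omega\in\cX$. By Lemma~\ref{lem:decompX}, $\omega$ is switching equivalent to $\omega_0\sqcup\omega_1\sqcup\omega_2\sqcup\omega_3$ with $\omega_i\in\cX_i$; switching preserves cardinality, so $|\omega|=\sum_i|\omega_i|$. By Theorem~\ref{thm:beta0}, $|\omega_0|\le 36$. For $i\in[3]$, the switching class of $\omega_i$ corresponds via $\bar\phi^{(i)}$ of Lemma~\ref{lem:J63H22} to a clique of $K$ of the same size. By Proposition~\ref{prop:J63K27clique} that size is at most $7$. Hence $|\omega|\le 36+3\cdot 7=57$.

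\textbf{Attainment.} Take any Latin square $S$ of order $6$ and the clique $c_1$ in each coset. The second part of Lemma~\ref{lem:decompX} shows that the union of the resulting $\omega_0,\dots,\omega_3$ lies in $\cX$, and it has $57$ elements.

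\textbf{Existence of $S,C_1,C_2,C_3$.} Let $|\omega|=57$. Then equality forces $|\omega_0|=36$ and $|\omega_i|=7$ for $i\in[3]$. By Theorem~\ref{thm:beta0}, $\omega_0=\phi^{(0)}(S)$ for a unique Latin square $S$. By Lemma~\ref{lem:J63H22}, each $\omega_i$ is switching equivalent to $\{x^{(i)}_{I,\varepsilon,1}:(I,\varepsilon)\in C_i\}$ for a unique clique $C_i$ of $K$ of size $7$. Switching is done vector by vector, and these sets lie in disjoint subsets of $X$. So switching each piece separately shows that $\omega$ is switching equivalent to the union of the stated form.

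\textbf{Uniqueness.} This is the step needing care, since the decomposition in Lemma~\ref{lem:decompX} was only existential. Suppose $\omega$ is switching equivalent to two unions of the stated form, built from $(S,C_1,C_2,C_3)$ and $(S',C_1',C_2',C_3')$. Then these two unions are switching equivalent to each other. The map $x\mapsto\sr-x$ sends $\beta_0+\Lambda$ to $-\beta_0+\Lambda$ and $\beta_i+\Lambda$ to itself for $i\in[3]$, since $2\beta_i\in\Lambda$ and $\sr\in\Lambda$. Therefore a switching bijection preserves the coset-wise parts up to switching.

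For the $\beta_0$-part, a vector in $\beta_0+\Lambda$ can only be matched with itself, since its switch lies in $-\beta_0+\Lambda$. So $\omega_0=\omega_0'$, and injectivity of $\phi^{(0)}$ gives $S=S'$. For each $i\in[3]$, the parts are switching equivalent, and the unique-representative statement in Lemma~\ref{lem:J63H22} (bijectivity of $\bar\phi^{(i)}$) gives $C_i=C_i'$.

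The main obstacle is only this bookkeeping for uniqueness; the combinatorial difficulty has already been absorbed into Proposition~\ref{prop:J63K27clique}.
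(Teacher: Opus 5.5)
Your proposal is correct and follows essentially the same route as the paper: decompose via Lemma~\ref{lem:decompX}, force each coset part to be maximum, and then invoke Theorem~\ref{thm:beta0}, Lemma~\ref{lem:J63H22} and Proposition~\ref{prop:J63K27clique}. Your explicit uniqueness argument fills in a step the paper leaves implicit in the bijectivity of $\phi^{(0)}$ and $\bar\phi^{(i)}$: switching fixes each coset $\beta_i+\Lambda$ for $i\in[3]$ and sends $\beta_0+\Lambda$ to the disjoint coset $-\beta_0+\Lambda$.
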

\begin{proof}
	Let $[\omega] \in \cX/\sim_{\sw}$ be a switching class of maximum affine equiangular sets in $L$.
	From Lemma~\ref{lem:decompX}, there exist $\omega_i \in \cX_i$ ($i=0,1,2,3$) such that
	\begin{align*}
		[\omega] = [\omega_0 \cup \omega_1 \cup \omega_2 \cup \omega_3].
	\end{align*}
	Since $\omega$ is maximum, the second assertion of Lemma~\ref{lem:decompX} implies that $\omega_i$ is maximum in $\cX_i$ for each $i\in\{0,1,2,3\}$.
	By Theorem~\ref{thm:beta0}, the set $\omega_0$ is given as in the statement.
	Also, by Lemma~\ref{lem:J63H22} and Proposition~\ref{prop:J63K27clique}, each $\omega_i$ ($i=1,2,3$) is given as in the statement.
	In particular, the cardinality of $\omega$ is $36 + 3 \cdot 7 = 57$.
\end{proof}

\begin{remark}	\label{remark:main}
	Let $C$ be the set of four vertices in~\eqref{prop:J63K27clique:1}.
	Define $\omega_0$ as in~\eqref{thm:main:omega0} for an arbitrary Latin square $S$, and define $\omega_i$ as in~\eqref{thm:main:omegai} with $C_i=C$ for $i=1,2,3$.
	Then $\omega_0\cup\omega_1\cup\omega_2\cup\omega_3$ consists of $48$ vectors in $X$.
	These vectors, together with the switching root, span a subspace of codimension $1$ in $\R L \simeq \R^{19}$.
	This gives a set of $48$ equiangular lines in $\R^{17}$, known to be maximum
	(see Table~\ref{table:N(d)}).
\end{remark}

Next, we describe a natural action of $S_6^3$ on the set of maximum affine equiangular sets in $L$.
Indeed, we identify the group $S_6^3$ with the direct product of $\Sym(\{1,\ldots,6\})$, $\Sym(\{7,\ldots,12\})$ and $\Sym(\{13,\ldots,18\})$.
Then, we see by the definition of $L$ that $S_6^3$ acts on the set
\begin{align*}
	\cM & = \{ \omega \in \cX : \omega \text{ is maximum in } \cX \}
\end{align*}
%of switching classes of maximum affine equiangular sets in $L \subset \R^{26}$ 
by permuting the first $18$ coordinates of $\R^{26}$ containing $L$.

Let $\cL$ be the set of Latin squares of order $6$.
Recall that the group $S_6^3$ acts on $\cL$; see after Definition~\ref{dfn:latin}.
Let $\cC$ be the set of maximum cliques in $K$, which is the distance $2$-or-$3$ graph of $J(6,3) \boxempty K_2$.
Since $S_6 \subset \Aut(J(6,3))$ acts naturally on $K$, the group $S_6^3$ acts on $\cC^3$ by acting on each coordinate.
Hence, $S_6^3$ acts naturally on the set $\cL \times \cC^3$.
Furthermore, this action of $S_6^3$ is compatible with the action on $\cM$ via the following mapping:
\begin{align}\label{eq:f}
	\begin{array}{rccc}
		f : & \cL \times \cC^3        & \to     & \cM                                                \\
		    & (\latin, C_1, C_2, C_3) & \mapsto & \omega_0 \cup \omega_1 \cup \omega_2 \cup \omega_3
	\end{array}
\end{align}
where $\omega_0$ is defined as in~\eqref{thm:main:omega0} and $\omega_i$ is defined as in~\eqref{thm:main:omegai} for $i=1,2,3$.
Hence, the function $f$ induces a surjection
\begin{align}\label{eq:bar_f}
	\begin{array}{rccc}
		\bar{f} : & \left( \cL \times \cC^3 \right) / S_6^3 & \to     & f(\cL \times \cC^3) / S_6^3     \\
		          & S_6^3 (\latin, C_1, C_2, C_3)           & \mapsto & S_6^3 f(\latin, C_1, C_2, C_3).
	\end{array}
\end{align}

As we will see in the proof of Corollary~\ref{cor:characteristic} below,
the characteristic polynomial of the Seidel matrix corresponding to a maximum affine equiangular set in $L$ is invariant under the action of $S_6^3$.
This fact allows us to show that there are only two possible characteristic polynomials
(see Corollary~\ref{cor:characteristic}).
Before stating the corollary, we recall the definition of the Seidel matrix corresponding to an affine equiangular set.
For an affine equiangular set $\omega = \{u_1, \ldots, u_n\}$ with norm $3$ with respect to the switching root $r$, the corresponding Seidel matrix is defined to be the matrix $B^\top B - 5I$ where
\begin{align}
	B := \begin{bmatrix}
		     \widetilde{u_1}^\top & \widetilde{u_2}^\top & \cdots & \widetilde{u_{n}}^\top
	     \end{bmatrix}
\end{align}
Here, the vectors $\widetilde{u_i}$ are defined in~\eqref{eq:tilde_def}.

\begin{corollary}	\label{cor:characteristic}
	The characteristic polynomial of the Seidel matrix corresponding to a maximum affine equiangular set in $L$ is either
	\begin{align*}
		(x + 5)^{39} (x - 13)^8 (x - 9)^8 (x^2 - 19x + 72)
	\end{align*}
	or
	\begin{align*}
		(x + 5)^{39} (x - 13)^6 (x - 9)^6 (x - 11) (x-12) (x^2 - 20x + 87)^2.
	\end{align*}
\end{corollary}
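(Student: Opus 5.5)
The plan is to show that the characteristic polynomial depends only on very coarse data: not on the Latin square at all, and on the cliques only through their $S_6$-orbits. Finitely many explicit computations then finish the proof.

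\textbf{Step 1: reduce to the Gram matrix.} For an affine equiangular set $\omega=\{u_1,\dots,u_{57}\}$ of norm $3$ with switching root $\sr$, \eqref{eq:tilde_def} gives $(\widetilde{u_i},\widetilde{u_j})=2(u_i,u_j)-1$. Hence the Seidel matrix is $2G-J-5I$, where $G$ is the Gram matrix of $\omega$. Replacing $u$ by $\sr-u$ conjugates the Seidel matrix by a diagonal $\pm1$ matrix. So by Theorem~\ref{thm:main} we may assume $\omega=f(S,C_1,C_2,C_3)$. An element of $S_6^3$ acts on $\R^{26}$ as a coordinate permutation fixing $\sr$, so it is an isometry, and the Seidel matrix changes only by a permutation similarity. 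Thus the characteristic polynomial is constant on $S_6^3$-orbits of $\cL\times\cC^3$. By Proposition~\ref{prop:J63K27clique}, each $C_i$ may therefore be taken to lie in $\{\pm c_1,\pm c_2\}$ as far as the clique data is concerned. However, $\cL$ still has many orbits, so Step 2 is needed.

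\textbf{Step 2: independence from the Latin square.} Let $P_1,P_2,P_3$ be the $36\times 6$ incidence matrices assigning each cell $(i,j,S_{ij})$ of $S$ to its row, column and symbol, respectively. By Lemma~\ref{lem:beta0} and the Latin property, the $\omega_0$ block of $G$ is
\[
  P_1P_1^\top+P_2P_2^\top+P_3P_3^\top .
\]
By the computation in the proof of Lemma~\ref{lem:crossinner}, $(x_{i_1,i_2,i_3},x^{(k)}_{I,\varepsilon,1})=[\,i_k\in I\,]$ depends only on $i_k$. Hence the $\omega_0\times\omega_k$ block equals $P_kN_k$, where $N_k$ is a $6\times7$ $0/1$ matrix determined by $C_k$ alone. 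The $\omega_k\times\omega_l$ blocks for $k,l\ge1$ are given by Lemmas~\ref{lem:crossinner} and~\ref{lem:J63H22} and do not involve $S$. The Latin property also gives $P_k^\top P_k=6I$ and $P_k^\top P_l=J_6$ for $k\neq l$, independently of $S$. The all-ones vector of length $36$ lies in the column space $W$ of $P_1,P_2,P_3$, which has dimension $16$.

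On $W^\perp$ (dimension $20$), the matrix $2G-J-5I$ acts on the $\omega_0$ coordinates as $-5$ and kills the cross blocks. On the complement, $W\oplus\R^{21}$, choose the basis consisting of the columns of the $P_k$ together with the standard basis of the $\omega_1\cup\omega_2\cup\omega_3$ coordinates. The matrix entries in this basis are polynomial expressions in $P_k^\top P_l$, the $N_k$, and the $\omega_k\times\omega_l$ blocks, none of which depend on $S$. Concretely, the characteristic polynomial equals $(x+5)^{20}$ times the characteristic polynomial of an explicit matrix built from $6I$, $J_6$, $N_k$ and the $\omega_k\times\omega_l$ blocks (after handling the $2$-dimensional linear dependency among the columns of the $P_k$). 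I expect this step to be the main obstacle, and I would handle it by working with the quotient of $\R^{18}$ by that dependency. Alternatively, I would check the claim directly by computing with two non-isotopic Latin squares.

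\textbf{Step 3: finite computation.} After Step 2, fix $S$ to be the cyclic Latin square. Using Step 1 and the independence of the three $S_6$ factors of $S_6^3$, which act separately on rows, columns and symbols and hence on $C_1$, $C_2$, $C_3$, we may take each $C_k\in\{\pm c_1,\pm c_2\}$. That leaves at most $4^3=64$ tuples, fewer after using the $\Sym(\{\pm1\})$-type symmetries. Compute the $57\times57$ Seidel matrix for each tuple and factor its characteristic polynomial by computer. Exactly the two listed polynomials should occur. As a consistency check, each has the eigenvalue $-5$ with multiplicity $39=57-18$, as it must for $57$ lines in $\R^{18}$.
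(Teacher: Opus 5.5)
Your argument is correct. Its skeleton matches the paper's: switch to $f(S,C_1,C_2,C_3)$, use $S_6^3$ to move each $C_k$ into $\{\pm c_1,\pm c_2\}$, prove independence from the Latin square, then compute the $64$ cases. The genuine difference is how you prove independence from $S$.

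The paper works with the $26\times 26$ matrix $BB^\top=\sum_{u\in\omega}\widetilde{u}^\top\widetilde{u}$, which has the same nonzero spectrum as $B^\top B$. It notes that the $\omega_0$ contribution is $\mathrm{diag}(12I_6-2J_6,\,12I_6-2J_6,\,12I_6-2J_6,\,O_8)$: each index occurs six times in each coordinate, and the Latin property together with $\sum_i\alpha_5^{(i)}=0$ kills the off-diagonal blocks. So $BB^\top$ is literally independent of $S$, and the factor $(x+5)^{31}$ comes for free from $B$ being $26\times 57$.

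You stay on the $57\times 57$ Gram side, split off the $20$-dimensional $(-5)$-eigenspace $W^\perp$, and argue that the restriction to the invariant complement $W\oplus\R^{21}$ depends only on $P_k^\top P_l$, the $N_k$ and the $\omega_k\times\omega_l$ blocks. This is sound, and the dependency you flag as the main obstacle is easy to handle:
\begin{enumerate}
\item Put $P=[P_1\,P_2\,P_3]$ and $Q=\mathrm{diag}(P,I_{21})$. Then both $Q^\top Q$ and $Q^\top(2G-J-5I)Q$ are independent of $S$.
\item The kernel of $P$ is always spanned by $(\one,-\one,0)$ and $(\one,0,-\one)$. So deleting the last columns of $P_2$ and $P_3$ from $Q$ gives a matrix $Y$ whose columns form a basis of $\mathrm{col}(Q)$ for every $S$.
\item Hence the restricted operator $(Y^\top Y)^{-1}Y^\top(2G-J-5I)Y$ is independent of $S$.
\end{enumerate}

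Both proofs rest on the same fact: only the second moments $P_k^\top P_l$ of the Latin square enter. The paper's frame-operator viewpoint makes this visible without any bookkeeping. Yours instead exhibits $20$ of the $39$ eigenvectors for $-5$ explicitly.

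Two side remarks. Your fallback of checking two non-isotopic Latin squares would not be a proof and should be dropped. The optional reduction below $64$ via $\Sym(\{\pm1\})$-type symmetries is also less clean than suggested. Negating one $A_1$ coordinate flips the signs in one clique, but after switching it complements and flips another. You do not need that reduction anyway.
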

\begin{proof}
	Let $\omega$ be a maximum affine equiangular set in $L$ with respect to the switching root $\sr$.
	By Theorem~\ref{thm:main}, we have
	\[\omega \sim_{\sw} \omega_0 \cup \omega_1 \cup \omega_2 \cup \omega_3\]
	where
	$\omega_0$ is defined as in~\eqref{thm:main:omega0} and $\omega_i$ is defined as in~\eqref{thm:main:omegai} for $i=1,2,3$ for some $(S, C_1, C_2, C_3) \in \cL \times \cC^3$.
	Since the characteristic polynomial of the Seidel matrix corresponding to $\omega$ is invariant under switching, we may assume that $\omega = \omega_0 \cup \omega_1 \cup \omega_2 \cup \omega_3$.

	Write $u_1, u_2, \ldots, u_{57}$ for the vectors in $\omega$.
	The corresponding Seidel matrix is
	$
		B^\top B - 5I
	$
	where
	$$
		B := \begin{bmatrix}
			\widetilde{u_1}^\top & \widetilde{u_2}^\top & \cdots & \widetilde{u_{57}}^\top
		\end{bmatrix}
		.$$
	Here, the vectors $\widetilde{u_i}$ are defined in~\eqref{eq:tilde_def} where $r$ is replaced with $\rho$.
	Since $B^\top B$ has the same nonzero eigenvalues as $B B^\top$, it suffices to determine the characteristic polynomial of
	$$B B^\top = \sum_{u \in \omega} \tilde{u}^\top \tilde{u}.$$
	Clearly, the characteristic polynomial of $B B^\top$ is invariant under the action of $S_6^3$.
	By the compatibility of the mapping $f$ defined in~\eqref{eq:f}, we may assume that $(C_1, C_2, C_3)$ is one of the $64$ representatives $(\pm c_{i_1}, \pm c_{i_2}, \pm c_{i_3})$ for $S_6^3$-orbits on $\cC^3$ (see Proposition~\ref{prop:J63K27clique}).

	Let $J$ be the all-ones matrix.
	Let $\one$ be the all-ones row vector, and $e_i$ be the row vector with $1$ in the $i$-th coordinate and $0$ elsewhere.
	We have by definition,
	\begin{align*}
		(\alpha_5^{(i)})^\top \alpha_5^{(j)}
		= \frac{1}{36} J + e_i^\top e_j - \frac{1}{6} \one^\top e_i - \frac{1}{6} \one^\top e_j.
	\end{align*}
	From this, we see that
	\begin{align*}
		\sum_{u \in \omega_0} \tilde{u}^\top \tilde{u}
		 & = \sum_{i,j=1}^6 \widetilde{x_{i,j,S_{ij}}}^\top \widetilde{x_{i,j,S_{ij}}}        \\
		 & = \sum_{i,j=1}^6
		\begin{bmatrix}
			\sqrt{2} \alpha_5^{(i)} & \sqrt{2}\alpha_5^{(j)} & \sqrt{2}\alpha_5^{(S_{ij})} & O
		\end{bmatrix}^\top
		\begin{bmatrix}
			\sqrt{2} \alpha_5^{(i)} & \sqrt{2} \alpha_5^{(j)} & \sqrt{2} \alpha_5^{(S_{ij})} & O
		\end{bmatrix} \\
		 & = \left[
			     \begin{array}{cccc}
				     12I_6-2J_6 & O          & O          & O   \\
				     O          & 12I_6-2J_6 & O          & O   \\
				     O          & O          & 12I_6-2J_6 & O   \\
				     O          & O          & O          & O_8
			     \end{array}
			     \right].
	\end{align*}
	This matrix is invariant under the action of $S_6^3$.
	Hence, $B B^\top$ can be computed by adding $\sum_{i=1}^3 \sum_{u \in \omega_i} \tilde{u}^\top \tilde{u}$ to the above matrix.
	By direct calculation, we see that the characteristic polynomial of $B B^\top - 5I$
	is either
	\begin{align*}
		(x + 5)^{8} (x - 13)^8 (x - 9)^8 (x^2 - 19x + 72)
	\end{align*}
	or
	\begin{align*}
		(x + 5)^{8} (x - 13)^6 (x - 9)^6 (x - 11) (x-12) (x^2 - 20x + 87)^2.
	\end{align*}
	Noting that the characteristic polynomial of Seidel matrix $B^\top B - 5I$ is $(x+5)^{57-26}$ times that of $B B^\top - 5I$, we obtain the desired conclusion.
\end{proof}

\section{Maximality in $\R L$ of affine equiangular sets in $L$}	\label{sec:maximality}

Although Theorem~\ref{thm:main} shows that there exist affine equiangular sets in $L$ of cardinality $57$,
which is the current lower bound for $N(18)$,
these sets are already maximal in dimension $18$,
as will be shown in Theorem~\ref{thm:maximality}.
Since there are too many such affine equiangular sets,
it is infeasible to verify the maximality of each of them.
Instead,
in Subsection~\ref{subsec:1},
we classify the possible lattices generated by affine equiangular sets in $L$ together with the switching root $\sr$.
Since the candidates for vectors that can be added to an affine equiangular set in $L$ are contained in the dual lattices of these possible lattices,
we prove in Subsection~\ref{subsec:2} that these affine equiangular sets are maximal.

\subsection{Lattices generated by affine equiangular sets with a switching root}	\label{subsec:1}

Before classifying the possible lattices generated by affine equiangular sets in $L$ together with the switching root $\sr$, we prepare some vectors and lattices.
Let $c_5 := \left( 1,1,0,0,-1,-1\right) \in A_5$ and
\[
	c := \left( c_5 , c_5, c_5, 2\alpha_1, 2\alpha_1, 2\alpha_1, 0 \right) \in L^*.
\]
Define
$L_{54} := \{ x \in L \mid (x,c) \equiv 0 \pmod 3 \}$,
which is an integral lattice of discriminant $54$.
Similarly, let
\[
	d := \left( 0 , \alpha_{\{4,5,6\}}, \alpha_{\{4,5,6\}}, \alpha_1, 0, 0, 0 \right) \in L^*.
\]
Then, define
$L_{24} := \{ x \in L \mid (x,d) \equiv 0 \pmod 2 \}$,
which is an integral lattice of discriminant $24$.
We remark that it can be verified that the root sublattices of $L_{54}$ and $L_{24}$ are $A_1^{10}$ and $A_5A_2^4A_1^3$, respectively.
In the following two lemmas, we investigate $L_{54}$ and $L_{24}$.

\begin{lemma}	\label{lem:overlattice}
	Every proper integral overlattice of $L_{54}$ or $L_{24}$ is isomorphic to $L$, via an isometry fixing the switching root $\sr$.
\end{lemma}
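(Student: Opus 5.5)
The plan is to reduce the statement to a finite computation in the discriminant groups of $L_{54}$ and $L_{24}$.

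\emph{Index.} Let $M$ be a proper integral overlattice of $L_{54}$, and put $k=[M:L_{54}]$. Then $\disc L_{54}=k^2\disc M$, so $k^2\mid 54$, which forces $k=3$ and $\disc M=6$. The same argument for $L_{24}$ gives $k^2\mid 24$, so $k=2$ and again $\disc M=6$.

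\emph{Correspondence with isotropic subgroups.} Integral overlattices $M$ with $L_{54}\subset M\subset L_{54}^*$ correspond bijectively to subgroups $M/L_{54}$ of $D_{54}:=L_{54}^*/L_{54}$ that are totally isotropic. Here isotropy means that the bilinear form $(\cdot,\cdot)\bmod\Z$ vanishes on the subgroup and the norm of each element is integral. The analogous statement holds for $D_{24}:=L_{24}^*/L_{24}$.

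Both discriminant groups are explicit. Since $L_{54}=\{x\in L\mid (x,c)\equiv0\pmod 3\}$, we have
\[
L_{54}^*=L^*+\Z\tfrac{c}{3}.
\]
Similarly $L_{24}^*=L^*+\Z\tfrac{d}{2}$. The group $L^*$ is generated modulo $L$ by a single glue vector, because $L^*/L$ has order $6$, so it can be written down from $\Lambda^*=(A_5^*)^3(A_1^*)^4$ together with the generators $\beta_0,\beta_1$ of $L/\Lambda$ (Lemma~\ref{lem:coset}).

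\emph{Enumeration.} For $L_{54}$ I would list all elements of order $3$ in $D_{54}$ with integral norm and collect the cyclic subgroups of order $3$ they generate. For $L_{24}$ I would list the elements of order $2$ in $D_{24}$ with integral norm. In each case the subgroup $L/L_{54}$ (respectively $L/L_{24}$) appears in the list, and I expect only a handful of further candidates.

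\emph{Identifying the other overlattices with $L$.} For each remaining isotropic subgroup, the aim is an automorphism of $L_{54}$ (respectively $L_{24}$) that fixes $\sr$ and carries $L$ to the corresponding overlattice. Natural candidates are:
\begin{itemize}
\item coordinate permutations in $S_6^3$ that preserve $c$ modulo $3L^*$ (respectively $d$ modulo $2L^*$), possibly combined with a permutation of the three $A_5$ blocks together with the matching permutation of the first three $A_1$ components;
\item sign changes $\alpha_1\mapsto-\alpha_1$ on the first three $A_1$ components, which fix $\sr$;
\item the map $-1$ on the $A_5$ components.
\end{itemize}
An isometry of $L_{54}$ permutes the overlattices of index $3$. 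It therefore suffices to check that the stabilizer of $\sr$ in $\Aut(L_{54})$ acts transitively on the isotropic subgroups found above, and likewise for $L_{24}$.

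\emph{Expected obstacle and fallback.} The hard step is producing these isometries explicitly, since the obvious coordinate symmetries may not realize the transitivity. If they fail, I would argue by root systems. The root sublattice of $L_{54}$ is $A_1^{10}$ and that of $L_{24}$ is $A_5A_2^4A_1^3$. I would compute the root system of each candidate overlattice $M$ of discriminant $6$ and show that it is $A_5^3A_1^4$, positioned so that $\sr$ is twice the minimal vector of one $A_1$ component. Then $M$ is an overlattice of $\Lambda$ of index $6$, and the uniqueness of its glue code up to the symmetries of $\Lambda$ fixing $\sr$ (an argument in the style of Lemma~\ref{lem:coset}) identifies $M$ with $L$.
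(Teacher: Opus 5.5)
Your reduction matches the paper's proof. $k^2\mid 54$ (resp.\ $k^2\mid 24$) forces index $3$ (resp.\ $2$), and integral overlattices correspond to isotropic subgroups. The paper then reports that exactly $4$ of the $13$ subgroups of order $3$ in $L_{54}^*/L_{54}\cong\Z/2\Z\times(\Z/3\Z)^3$ give integral overlattices, and checks by computer that each is isometric to $L$ via an isometry fixing $\sr$.

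The gap is in your identification step: the symmetries you list provably cannot give the transitivity you want. Each of them lies in $\Aut(\Lambda)$. If $M\neq L$ is another integral overlattice of $L_{54}$, then $M\cap L=L_{54}$ because the index is prime. Also $\Lambda\not\subseteq L_{54}$, since the root $r=(0,0,0,2\alpha_1,0,0,0)\in\Lambda$ has $(r,c)=2$; hence $\Lambda\not\subseteq M$. So any $g\in\Aut(\Lambda)$ with $g(L_{54})=L_{54}$ sends $L$ to an overlattice of $L_{54}$ containing $\Lambda$, which forces $g(L)=L$. The same holds for $L_{24}$, since $(r,d)=1$. All your candidates therefore stabilize $L/L_{54}$ (resp.\ $L/L_{24}$), and transitivity would need isometries that do not preserve $\Lambda$, which you do not supply. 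Moreover, transitivity of the stabilizer of $\sr$ in $\Aut(L_{54})$ is stronger than the lemma requires. For each $M$ you only need some isometry $M\to L$ with $\sr\mapsto\sr$, and it need not map $L_{54}$ to itself. Testing this directly for the few candidates, with $\sr$ marked, is what the paper's computation does.

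Your fallback can be repaired, but not as written. From $\disc\Lambda=6^3\cdot2^4$ and $\disc M=6$ one gets $[M:\Lambda]=24$, not $6$ (compare Lemma~\ref{lem:coset}). Uniqueness of the glue code does not follow from an argument like Lemma~\ref{lem:coset}; it needs its own proof, which can be supplied as follows:
\begin{enumerate}
\item The $3$-part must be spanned by some $(\pm1,\pm1,\pm1)$.
\item The $2$-part is a self-orthogonal binary $[7,3]$ code, hence either $i_2^3\oplus0$ or the $[7,3,4]$ simplex code.
\item Excluding extra norm-$2$ glue vectors (words of $(A_5,A_1)$-weight $(1,1)$ or $(0,4)$) leaves only the simplex code with the three $A_5$-coordinates non-collinear.
\item The four $A_1$-coordinates are then a line plus one point, and $\Aut(L)$ fixes that point. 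In $L$ it is the component of $\sr$, the one glued by $\beta_0$ to all three $A_5$'s.
\end{enumerate}
So for each $M$ you must show that $\sr$ spans this distinguished $A_1$ component of $R(M)$, not just \emph{some} $A_1$ component, and you must still compute $R(M)$. Either way, the content of the lemma remains a computation on the specific overlattices, as in the paper.
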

\begin{proof}
	Let $M$ be a proper integral overlattice of $L_{54}$.
	Then, $M/L_{54}$ is a nontrivial subgroup of $L_{54}^*/L_{54}$
	such that $|M/L_{54}||L_{54}^*/M^*|=|M/L_{54}|^2$ divides $|L_{54}^*/L_{54}|$.
	Using a computer, we can verify that $L_{54}^*/L_{54}$ is
	isomorphic to $\Z/2\Z \times (\Z/3\Z)^3$.
	Thus
	%	Also, it is known that 
	%	\begin{align*}
	%		\disc L_{54} / |M/L_{54}|^2 = \disc M \in \Z
	%	\end{align*}
	%	holds, and hence 
	$|M/L_{54}| = 3$, and $M$ is generated by $L_{54}$ together with a representative of a coset
	%	Therefore, every proper overlattice of $L_{54}$ is obtained from $L_{54}$ and an element 
	of order $3$ in the quotient group $L_{54}^*/L_{54}$.
	Among the $13$ subgroups of order $3$ in $L_{54}^*/L_{54}$, only $4$ subgroups correspond to
	an integral overlattice of $L_{54}$, all of which
	%	By checking which of these overlattices are integral, we can enumerate all proper integral overlattices of $L_{54}$.
	%	Furthermore, we can find that they
	are isometric to $L$ via an isometry fixing $\sr$.
	%	Similarly, we can verify that the desired result for $L_{24}$ with a computer.

	The proof for proper integral overlattices of $L_{24}$ is analogous, hence omitted.
\end{proof}

\begin{lemma}	\label{lem:noL24}
	No affine equiangular set in $L$ of cardinality $57$ together with the switching root $\sr$ generates the lattice $L_{24}$.
\end{lemma}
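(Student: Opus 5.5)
The plan is to show the stronger statement that the lattice $M$ generated by $\omega\cup\{\sr\}$ is never contained in $L_{24}$. Since $L_{24}=\{x\in L\mid (x,d)\equiv 0 \pmod 2\}$, it suffices to exhibit one vector $u\in\omega$ with $(u,d)$ odd.

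First, I reduce to the normal form of Theorem~\ref{thm:main}. We have $(\sr,d)=0$. Switching replaces $u$ by $\sr-u$, and $(\sr-u,d)=-(u,d)$ has the same parity as $(u,d)$. Moreover, the lattice generated by $\omega\cup\{\sr\}$ is unchanged by switching. So we may assume $\omega=\omega_0\cup\omega_1\cup\omega_2\cup\omega_3$, with $\omega_0=\{x_{i,j,S_{ij}}: i,j\in[6]\}$ for a Latin square $S$ of order $6$.

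Next, I compute $(x_{i_1,i_2,i_3},d)$. Since $d$ vanishes on the first $A_5$-component and on the last $A_1$-component,
\[
(x_{i_1,i_2,i_3},d)=(\alpha_5^{(i_2)},\alpha_{\{4,5,6\}})+(\alpha_5^{(i_3)},\alpha_{\{4,5,6\}}).
\]
As in the proof of Lemma~\ref{lem:crossinner}, $\alpha_5^{(i)}$ acts on $\alpha_I$ like $-e_i$, so $(\alpha_5^{(i)},\alpha_I)$ equals $\frac12$ if $i\in I$ and $-\frac12$ otherwise. Hence $(x_{i_1,i_2,i_3},d)$ is odd exactly when precisely one of $i_2,i_3$ lies in $\{4,5,6\}$.

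Now I use the Latin square property. In column $j=1$, every symbol occurs once, so some row $i$ has $S_{i1}\in\{4,5,6\}$. Then $x_{i,1,S_{i1}}\in\omega_0$ has odd inner product with $d$. So $x_{i,1,S_{i1}}\notin L_{24}$, hence $M\not\subseteq L_{24}$, and in particular $M\neq L_{24}$. Note that this argument uses only $\omega_0$, so the cliques $C_1,C_2,C_3$ play no role.

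The main obstacle is whether the lemma is meant up to an isometry fixing $\sr$, rather than literally. The subsequent argument, which classifies generated lattices via Lemma~\ref{lem:overlattice}, suggests it might be. In that case the parity argument is insufficient, and I would instead compare isometry invariants of $M$ with those of $L_{24}$.

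In the first step of that fallback, I would use $\disc$. Using the Latin square structure and Lemma~\ref{lem:coset}, I expect $M$ to meet all four cosets $\beta_0+\Lambda,\beta_1+\Lambda,\beta_2+\Lambda,\beta_3+\Lambda$. I would then try to show that $M$ has index $1$ or $3$ in $L$, rather than $2$. This would rule out $\disc M=24$, since $\disc M=6\,[L:M]^2$.

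Failing that, I would compare root sublattices. The root sublattice of $L_{24}$ is $A_5A_2^4A_1^3$, and $M$ has full rank, so I would exhibit roots in $M$ that are forced by $\omega_1\cup\omega_2\cup\omega_3$. By Proposition~\ref{prop:J63K27clique}, it suffices to check the $64$ representatives of $\cC^3$ modulo $S_6^3$ against an arbitrary Latin square. For each, I would check by computer that the resulting root system is not $A_5A_2^4A_1^3$.
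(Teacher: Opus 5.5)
Your approach is the paper's: pass to the normal form of Theorem~\ref{thm:main}, and use column $1$ of the Latin square to find a vector of $\omega_0$ whose inner product with $d$ is odd. Your reduction to the normal form is fine. However, the parity computation is inverted, and as a result the witness you exhibit fails.

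Your value $(\alpha_5^{(i)},\alpha_{\{4,5,6\}})=\frac12$ for $i\in\{4,5,6\}$ and $-\frac12$ otherwise is correct. From it, $(x_{i_1,i_2,i_3},d)$ equals $1$ if $i_2,i_3\in\{4,5,6\}$ and $-1$ if $i_2,i_3\notin\{4,5,6\}$. It equals $0$ exactly when precisely one of $i_2,i_3$ lies in $\{4,5,6\}$, which is the opposite of what you wrote.

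So for the row with $S_{i1}\in\{4,5,6\}$ you get $(x_{i,1,S_{i1}},d)=-\frac12+\frac12=0$. That vector does lie in $L_{24}$, and the step fails as written. The repair is immediate: column $1$ also contains a symbol from $\{1,2,3\}$, and for that row $(x_{i,1,S_{i1}},d)=-1$ is odd. This is exactly the paper's argument, read contrapositively: containment in $L_{24}$ would force $S_{i1}\in\{4,5,6\}$ for every $i$, which is impossible.

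Your fallback plan (discriminant or root-system comparison up to isometry) is unnecessary. The literal statement already implies the version up to isometry fixing $\sr$. Suppose $g\colon\langle\omega\cup\{\sr\}\rangle\to L_{24}$ is an isometry with $g(\sr)=\sr$. Then $g(\omega)\subseteq L_{24}\subseteq L$ is again a $57$-element affine equiangular set in $L$ with respect to $\sr$. Together with $\sr$ it generates $L_{24}$ literally, which the lemma excludes. This is how the lemma is applied in Proposition~\ref{prop:classifylattice}.
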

\begin{proof}
	Assume that there exists an affine equiangular set $\omega$ in $L$ of cardinality $57$ such that $\langle \omega \cup \{\sr\} \rangle = L_{24}$.
	By Theorem~\ref{thm:main}, the set $\omega$ is the disjoint union of four affine equiangular sets $\omega_i \in \cX_i$ ($i=0,1,2,3$) as described there.
	In particular, $\omega_0$ is given by~\eqref{thm:main:omega0} for some Latin square $\latin$ of order $6$.
	Recall that $d = ( 0 , \alpha_{\{4,5,6\}}, \alpha_{\{4,5,6\}}, \alpha_1, 0, 0, 0 )$ and $\alpha_{\{4,5,6\}} = \left( 1,1,1,-1,-1,-1\right)/2$.
	Since $\omega$ is contained in $L_{24} = \{ x \in L \mid (x,d) \equiv 0 \pmod 2 \}$, we have for $x_{i, j, S_{i,j}} \in \omega_0$ that
	\begin{align*}
		0
		 & \equiv (x_{i,j,S_{i,j}}, d) \pmod 2                                                             \\
		 & = 0 + ( \alpha_5^{(j)}, \alpha_{\{4,5,6\}} ) + ( \alpha_5^{(S_{i,j})}, \alpha_{\{4,5,6\}} ) + 0 \\
		 & = -( e_{j}, \alpha_{\{4,5,6\}} ) - ( e_{S_{i,j}}, \alpha_{\{4,5,6\}} ).
	\end{align*}
	In particular, the corresponding Latin square $S$ satisfies that
	\begin{align*}
		\latin_{i, 1} \in \{4,5,6\} \text{ for every } i \in [6].
	\end{align*}
	However, this is impossible since each symbol appears exactly once in the first column of $S$.
\end{proof}

From the above claims, we classify the possible lattices generated by maximum affine equiangular sets in $L$ together with the switching root $\sr$.

\begin{proposition}	\label{prop:classifylattice}
	Every maximum affine equiangular set in $L$ together with the switching root $\sr$ generates $L$ or $L_{54}$ up to isometry fixing $\sr$.
\end{proposition}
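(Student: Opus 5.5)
Let $\omega$ be a maximum affine equiangular set in $L$, so $|\omega|=57$ by Theorem~\ref{thm:main}, and put $M:=\langle \omega\cup\{\sr\}\rangle\subseteq L$. The plan is to pin down $M$ as a sublattice of $L$ of small index. First we show that $M$ has full rank $19=\operatorname{rank} L$. This follows from the Seidel spectrum in Corollary~\ref{cor:characteristic}: the eigenvalue $-5$ has multiplicity exactly $39=57-18$ in both cases. Hence $B^\top B$ has rank $18$, so the $\widetilde{u}$ span an $18$-dimensional subspace of $\sr^\perp$, and together with $\sr$ the span is $19$-dimensional. Since switching replaces $u$ by $\sr-u$, it does not change $M$, so we may assume $\omega=\omega_0\cup\omega_1\cup\omega_2\cup\omega_3$ is as in Theorem~\ref{thm:main}. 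Moreover, $M$ is an integral lattice with $M\subseteq L$, so $[L:M]^2=\disc M/\disc L$ is finite.

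The next step is to bound $[L:M]$ by exhibiting enough of $L$ inside $M$. Differences of elements of $\omega_0$ lie in $A_5^3$. For a Latin square, the vectors $x_{i,j,S_{ij}}-x_{i,j',S_{ij'}}$ give $(0,e_{j'}-e_j,\pm(e_{S_{ij'}}-e_{S_{ij}}),0,\dots)$-type roots. Combining rows and columns through the Latin property, we expect these to generate a large part of $A_5^3$, for instance a sublattice containing all $(e_a-e_b)$-combinations tied across components. Similarly, the $\omega_i$ for $i=1,2,3$ contribute vectors in the $\beta_i$ cosets and in the $A_1^4$ part. Since $L/\Lambda\cong \Z/6\times(\Z/2)^2$ and $\omega$ meets the cosets $\beta_0,\beta_1,\beta_2,\beta_3$, the image of $M$ in $L/\Lambda$ is all of $L/\Lambda$. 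So $[L:M]=[\Lambda:\Lambda\cap M]$, which we aim to show divides $6$ (equivalently, $\disc M\in\{6,24,54,216\}$).

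Then we carry out the classification. Any proper sublattice $M\subsetneq L$ lies in a maximal sublattice of prime index $p$, which has the form $\{x\in L\mid (x,v)\equiv 0 \pmod p\}$ for some $v\in L^*$. Because of the index bound, $p\in\{2,3\}$, and index-$p$ sublattices containing $M$ correspond to nonzero classes $v\in L^*/pL^*$ annihilating $M$ mod $p$. Up to the $S_6^3$ action and the symmetries of $L$ fixing $\sr$, we would enumerate these by computer, or by hand using the structure of the Latin square and the cliques $\pm c_1,\pm c_2$ from Proposition~\ref{prop:J63K27clique}. We expect every index-$3$ option compatible with $\omega$ to be isometric (fixing $\sr$) to $L_{54}$, and every index-$2$ option to be isometric to $L_{24}$. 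Lemma~\ref{lem:noL24} excludes $L_{24}$. It then remains to exclude index $6$ and the case of a proper sublattice of $L_{54}$. A proper sublattice of $L_{54}$ containing $M$ would, by the same prime-index argument, lie in an index-$2$ or index-$3$ sublattice. Lemma~\ref{lem:overlattice} (every proper integral overlattice of $L_{54}$ or $L_{24}$ is $L$) is what lets us identify the intermediate lattices and reduce the chain to these two cases.

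The main obstacle is the index bound together with the enumeration of the mod-$p$ functionals $v$. The space of Latin squares is too large to treat one by one, so the argument must be uniform. Concretely, I would show that $(x,v)\equiv 0\pmod p$ on $\omega_0$ forces $v$ restricted to the $A_5^3$ part to be constant on rows, columns and symbols in a compatible way, since the Latin property makes each symbol appear in each column. This is the same mechanism used in Lemma~\ref{lem:noL24}. It should leave only finitely many shapes of $v$, essentially the vectors $c$ and $d$ up to symmetry. Those shapes are then checked against $\omega_1,\omega_2,\omega_3$ for the $64$ representative clique triples, which is a finite computer verification.
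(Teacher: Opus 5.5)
Your top-down framework, which traps $M$ inside index-$p$ sublattices $\{x\in L:(x,v)\equiv 0\pmod p\}$, is legitimate. However, the two steps that carry all the content are only announced, and the lemma you cite to finish does not apply.

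\textbf{The index bound is never proved.} You state $[L:M]\mid 6$ as an aim (``we expect these to generate a large part of $A_5^3$''), so nothing restricts $p$ to $\{2,3\}$ or stops the descent. Your Latin-square mechanism does dispose of primes $p\nmid 6$ uniformly. Write the condition on $\omega_0$ as $g_1(i)+g_2(j)+g_3(\latin_{ij})$ being constant modulo $p$, where $g_k$ records the $k$-th $A_5$-component of $v$. Summing over $j$ gives $6(g_1(i)-g_1(i'))\equiv 0$, so $g_1$, $g_2$, $g_3$ are constant, and then $\omega_1,\omega_2,\omega_3,\sr$ kill the $A_1^4$ part. But this gives no control over the $2$- and $3$-parts of $L/M$, for instance $L/M\cong\Z/9\Z$ or $(\Z/3\Z)^2$.

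\textbf{Lemma~\ref{lem:overlattice} is used in the wrong direction.} Once $M\subseteq L_{54}$, you must still show that no index-$2$ or index-$3$ sublattice of $L_{54}$ contains $M$, for every Latin square compatible with $c$. Lemma~\ref{lem:overlattice} says nothing about this: it describes lattices \emph{above} $L_{54}$ and $L_{24}$, not below them. So the case $M\subsetneq L_{54}$ is left open. The enumeration of admissible $v$ modulo $2$ and $3$, which you say ``should'' give only $c$ and $d$ up to symmetry, is likewise expected rather than carried out.

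\textbf{The missing idea is an $\latin$-independent lower bound on $M$.} This is how the paper makes the argument uniform over the too-many Latin squares.
\begin{itemize}
\item After reducing to representatives $(\latin,\pm c_i,\pm c_j,\pm c_k)$, note that $M$ contains the sublattice generated by $\omega_1\cup\omega_2\cup\omega_3\cup\{\sr\}$ together with only four vectors of $\omega_0$: $x_{1,1,\latin_{11}},x_{1,2,\latin_{12}},x_{2,1,\latin_{21}},x_{2,2,\latin_{22}}$, from the upper-left $2\times2$ subarray.
\item This sublattice depends on finitely many data. A computer check shows it is already isometric, fixing $\sr$, to $L$, $L_{54}$ or $L_{24}$. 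In particular it has full rank, so your spectral rank argument is unnecessary.
\item $M$ is then an integral lattice sandwiched between this sublattice and $L$. Lemma~\ref{lem:overlattice}, applied upward as intended, gives $M\in\{L,L_{54},L_{24}\}$ up to isometry fixing $\sr$, and Lemma~\ref{lem:noL24} removes $L_{24}$.
\end{itemize}
To keep your top-down route instead, you would need to prove, uniformly in $\latin$, that no index-$2$ sublattice of $L$ and no index-$2$ or index-$3$ sublattice of $L_{54}$ contains $M$. That is the actual content of the proposition, and your proposal does not provide it.
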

\begin{proof}
	By Theorem~\ref{thm:main}, the subset $f(\cL \times \cC^3)$ of $\cM$ is a complete set of representatives of $\cM / \sim_{\sw}$.
	Here, the mapping $f : \cL \times \cC^3 \to \cM$ is defined in~\eqref{eq:f}.
	If two affine equiangular sets $\omega$ and $\omega'$ with respect to a switching root $\sr$ are switching equivalent,
	then the two lattices $\langle \omega \cup \{\sr\} \rangle$ and $\langle \omega' \cup \{\sr\} \rangle$ are the same.
	Hence, it suffices to consider the lattices $\langle \omega \cup \{\sr\} \rangle$ for each $\omega \in f(\cL \times \cC^3)$.

	Also, we see that if two affine equiangular sets $\omega$ and $\omega'$ in $f(\cL \times \cC^3)$ are conjugate under the action of $S_6^3$,
	then the two lattices $\langle \omega \cup \{\sr\} \rangle$ and $\langle \omega' \cup \{\sr\} \rangle$ are isometric via an isometry fixing $\sr$.
	As discussed after Remark~\ref{remark:main}, the mapping $f$ induces a mapping $\bar{f} : \left( \cL \times \cC^3 \right) / S_6^3 \to f(\cL \times \cC^3) / S_6^3$.
	%Indeed, if there exists an element $\sigma \in S_6^3$ such that $[\omega'] = \sigma([\omega])$,
	%then $\sigma$ induces an isometry from $\langle \omega \cup \{\sr\} \rangle$ to $\langle \omega' \cup \{\sr\} \rangle$ fixing $\sr$ as desired.
	Therefore, it suffices to prove that for some representative $(\latin, C_1, C_2, C_3)$ of each orbit in $\left( \cL \times \cC^3 \right) / S_6^3$,
	\begin{align}\label{prop:classifylattice:0}
		\langle f(\latin, C_1, C_2, C_3) \cup \{\sr\} \rangle \in \left\{ L, L_{54}, L_{24} \right\},
	\end{align}
	up to isometry fixing $\sr$.
	To this end, we provide explicit representatives of the orbits in $\left( \cL \times \cC^3 \right) / S_6^3$.
	By Proposition~\ref{prop:J63K27clique},
	we have $\{c_1, -c_1, c_2, -c_2 \}$ as a set of representatives of $\cC/S_6$.
	Therefore, the set of tuples $(\latin,\pm c_i,\pm c_j,\pm c_k)$ with $\latin \in \cL$ and $i,j,k \in \{1,2\}$
	contains a complete set of representatives for
	$\left( \cL \times \cC^3 \right) / S_6^3$.
	Hence, our goal is reduced to proving that
	\begin{align}\label{prop:classifylattice:1}
		\langle f(\latin, \pm c_i, \pm c_j, \pm c_k) \cup \{\sr\} \rangle \in \left\{ L, L_{54}, L_{24} \right\}
	\end{align}
	for any $\latin \in \cL$ and $i,j,k=1,2$, up to isometry fixing $\sr$.
	However, it is difficult to directly verify this since the cardinality of $\cL$ is very large~\cite{MR2176596}. %frolov1890, 
	Therefore, we instead consider sublattices of these lattices.
	The set $\phi^{(0)}(\latin)$ contains the vectors
	$
		x_{1,1,\latin_{11}}, x_{1,2,\latin_{12}}, x_{2,1,\latin_{21}}, x_{2,2,\latin_{22}},
	$
	which are induced by the upper-left $2 \times 2$ subarray of $\latin$.
	Hence,
	\begin{align}\label{prop:classifylattice:2}
		\langle f(\latin, \pm c_i, \pm c_j, \pm c_k) \cup \{\sr\} \rangle
		= \langle \phi^{(0)}(\latin) \cup \phi^{(1)}(\pm c_i) \cup \phi^{(2)}(\pm c_j) \cup \phi^{(3)}(\pm c_k) \cup \{\sr\} \rangle
	\end{align}
	contains the sublattice
	\begin{align}
		\langle
		\{x_{1,1,s_{11}}, x_{1,2,s_{12}}, x_{2,1,s_{21}}, x_{2,2,s_{22}}\}
		\cup \phi^{(1)}(\pm c_i) \cup \phi^{(2)}(\pm c_j) \cup \phi^{(3)}(\pm c_k) \cup \{\sr\} \rangle \label{prop:classifylattice:4}
	\end{align}
	for some $s_{11}, s_{12}, s_{21}, s_{22} \in [6]$ with $s_{21} \neq s_{11} \neq s_{12}$ and $s_{21} \neq s_{22} \neq s_{12}$.
	It can be readily verified by computer that every lattice of the form~\eqref{prop:classifylattice:4}
	is isometric to $L$, $L_{54}$ or $L_{24}$ via an isometry fixing $\sr$.
	By Lemma~\ref{lem:overlattice}, all lattices of the form~\eqref{prop:classifylattice:2} are also isometric to $L$, $L_{54}$ or $L_{24}$ via an isometry fixing $\sr$.
	By Lemma~\ref{lem:noL24}, we conclude that~\eqref{prop:classifylattice:1} holds.
\end{proof}

This proposition shows the following.

\begin{theorem}	\label{thm:L}
	Every maximum affine equiangular set with respect to the switching root $\sr$ in $L$ is isometric to one with respect to the same switching root $\sr$ in $L$
	which together with $\sr$ generates either $L$ or $L_{54}$.
\end{theorem}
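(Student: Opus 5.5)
Theorem~\ref{thm:L} follows from Proposition~\ref{prop:classifylattice}; the only point is to move the affine equiangular set along the isometry so that it lands inside $L$ again.

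Let $\omega$ be a maximum affine equiangular set in $L$ with respect to $\sr$, and put $M:=\langle \omega\cup\{\sr\}\rangle\subseteq L$. By Proposition~\ref{prop:classifylattice} there is a lattice $N\in\{L,L_{54}\}$ and an isometry $h\colon M\to N$ with $h(\sr)=\sr$. I would extend $h$ linearly to an isometry of the spans $\R M\to\R N$. Both spans equal $\R L$: the $57$ vectors together with $\sr$ span a space of dimension $19$, because the induced lines are $57$ equiangular lines in $\langle\omega\rangle_\R\cap\sr^\perp$ and $57>N(17)=48$ forces that space to have dimension at least $18$, while $\R L_{54}=\R L$.

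Now set $\omega':=h(\omega)$. Since $h$ preserves inner products and fixes $\sr$, we get $(h(u),\sr)=(u,\sr)=1$, $(h(u),h(u))=3$ and $(h(u),h(v))\in\{0,1\}$. Hence $\omega'$ is an affine equiangular set with norm $3$ with respect to $\sr$. It lies in $N\subseteq L$, since $L_{54}\subseteq L$ by definition. It has cardinality $57$, so by Theorem~\ref{thm:main} it is maximum in $L$. Moreover $\langle\omega'\cup\{\sr\}\rangle=h(M)=N$.

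Finally, $h$ restricted to $\omega\cup\{\sr\}$ is a bijective isometry onto $\omega'\cup\{\sr\}$ sending $\sr$ to $\sr$. By Definition~\ref{dfn:isometry}, taking $\omega''=\omega'$, this shows that $\omega$ and $\omega'$ are isometric. No step is a real obstacle. The one subtlety is that ``generates $L$ or $L_{54}$ up to isometry fixing $\sr$'' must be read as giving an actual isometry onto the concrete lattices $L$ or $L_{54}$ sitting inside $\R L$, rather than an abstract copy. That is exactly the form in which Proposition~\ref{prop:classifylattice} and Lemma~\ref{lem:overlattice} are stated.
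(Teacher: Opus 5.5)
Your proposal is correct and matches the paper's proof: use Proposition~\ref{prop:classifylattice} to get an isometry fixing $\sr$ from $\langle\omega\cup\{\sr\}\rangle$ onto $L$ or $L_{54}$, then take the image of $\omega$ as the required set. The rank computation via $N(17)=48$ is harmless but unnecessary, because a bijective isometry of lattices $M\to N$ already forces $M$ and $N$ to have the same rank, so $h$ extends linearly to their real spans.
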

\begin{proof}
	Let $\omega$ be a maximum affine equiangular set in $L$ with respect to the switching root $\sr$.
	By Proposition~\ref{prop:classifylattice}, the lattice $\langle \omega \cup \{\sr\} \rangle$ is isometric to $L$ or $L_{54}$ via an isometry $f$ fixing $\sr$.
	Then, $\omega$ is isometric to $f(\omega)$ with the same switching root $\sr$, and $\langle f(\omega) \cup \{\sr\} \rangle$ coincides with $L$ or $L_{54}$.
	Therefore, $f(\omega)$ is the desired maximum affine equiangular set in $L$.
\end{proof}

\subsection{Maximality in $\R L$ of maximum affine equiangular sets in $L$}	\label{subsec:2}
In this subsection, we show that all the maximum affine equiangular sets in $L$ are maximal in $\R L$.

\begin{theorem}
	\label{thm:maximality}
	Every maximum affine equiangular set in $L$ with respect to the switching root $\sr$ is maximal in $\R L$.
\end{theorem}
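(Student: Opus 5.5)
By Theorem~\ref{thm:L}, it suffices to consider a maximum affine equiangular set $\omega$ in $L$ such that $M:=\langle \omega\cup\{\sr\}\rangle$ equals $L$ or $L_{54}$. Maximality is invariant under isometries fixing $\sr$ and under switching, so this reduction is harmless. Suppose $v\in\R L$ extends $\omega$. Then $(v,v)=3$, $(v,\sr)=1$, and $(v,u)\in\{0,1\}$ for all $u\in\omega$. Since $(v,\sr)$ and all $(v,u)$ are integers and $M$ is generated by $\omega\cup\{\sr\}$, we get $v\in M^*$. The lattice $\langle M, v\rangle$ is then integral: $(v,v)=3\in\Z$ and $(v,M)\subset\Z$. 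So $\langle M,v\rangle$ is an integral overlattice of $M$.

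\emph{Case $M=L$.} Since $\disc L=6$ is squarefree, $L$ has no proper integral overlattice. Hence $v\in L$, so $v\in X$, and $\omega\cup\{v\}$ is an affine equiangular set in $L$ of size $58$. This contradicts Theorem~\ref{thm:main}.

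\emph{Case $M=L_{54}$.} Either $v\in L_{54}\subset L$, which again contradicts Theorem~\ref{thm:main}, or $\langle L_{54},v\rangle$ is a proper integral overlattice of $L_{54}$. By Lemma~\ref{lem:overlattice}, such an overlattice is $L'=g(L)$ for an isometry $g$ of $\R L$ fixing $\sr$. Then $g^{-1}(\omega\cup\{v\})$ is an affine equiangular set of size $58$ in $L$ with respect to $\sr$, contradicting Theorem~\ref{thm:main} once more.

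In the second case, I must use that $L'\supseteq L_{54}\supseteq\omega$ and that $g^{-1}$ preserves $\sr$ and all inner products. These hold by the statement of Lemma~\ref{lem:overlattice}.

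The main obstacle is the claim in the first case that $L$ has no proper integral overlattice. An integral overlattice $M'\supsetneq L$ would satisfy $[M':L]^2\mid\disc L=6$, which forces $[M':L]=1$. The remaining ingredients are Theorem~\ref{thm:L} and Lemma~\ref{lem:overlattice}, so the argument reduces to these index/discriminant facts with no new computation.
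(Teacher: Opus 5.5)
Your proof is correct and follows the paper's argument: reduce via Theorem~\ref{thm:L}, observe that adjoining the extra vector yields an integral overlattice of $M$, and use Lemma~\ref{lem:overlattice} to transport the $58$-element set into $L$, contradicting Theorem~\ref{thm:main}. The only difference is cosmetic. The paper handles both cases at once by noting that the new lattice always contains $L_{54}$, whereas you treat $M=L$ separately using the squarefree discriminant $6$.
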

\begin{proof}
	Let $\omega$ be a maximum affine equiangular set in $L$ with respect to the switching root $\sr$.
	By Theorem~\ref{thm:L}, this set $\omega$ is isometric to one which together with $\sr$ generates either $L$ or $L_{54}$.
	Without loss of generality, we may assume $M := \langle \omega \cup \{ \sr \} \rangle$ is either $L$ or $L_{54}$.

	For a contradiction, suppose that $\omega$ is not maximal in $\R L$.
	Then, there exists a vector $x \in \R L$ such that $\omega \cup \{ x \}$ is an affine equiangular set of cardinality $58$ with respect to the switching root $\sr$.
	Then, $N := \langle \omega \cup \{ x, \sr \} \rangle$ is an integral lattice containing $M$.
	In particular, $N$ contains $L_{54}$.
	Then by Lemma~\ref{lem:overlattice}, we have $N$ is contained in $L$ up to isometry fixing $\sr$.
	However, Theorem~\ref{thm:main} implies that every affine equiangular set in $L$ with respect to $\sr$ has cardinality at most $57$.
	This is a contradiction.
	Therefore, $\omega$ is maximal in $\R L$.
\end{proof}

\section{Strong maximality of affine equiangular sets in $L$}	\label{sec:strongmaximality}
In this section, we prove that every maximum affine equiangular set in $L$ with respect to the switching root $\sr$ is not strongly maximal.
Consequently, although all previously known sets of $57$ equiangular lines~\cite{Greaves2023,LMTY2025} in $\mathbb{R}^{18}$ were strongly maximal, there also exist such sets that are not strongly maximal.

\begin{theorem}	\label{thm:stronglymaximality}
	Every maximum affine equiangular set in $L$ with respect to the switching root $\sr$ is not strongly maximal.
\end{theorem}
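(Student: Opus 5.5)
The plan is to exhibit, for each maximum affine equiangular set $\omega$ in $L$, an explicit vector that extends it once a new dimension is allowed. It suffices to find $v\in\R L$ with
\[
(v,v)<3,\qquad (v,\sr)=1,\qquad (v,u)\in\{0,1\}\ \text{for all }u\in\omega .
\]
Indeed, we may then embed $\R L$ into $\R L\oplus\R e$ with $e$ a unit vector, and put $w=v+\sqrt{3-(v,v)}\,e$. Then $w$ has norm $3$, and it has the same inner products as $v$ with $\sr$ and with all of $\omega$. So $\omega\cup\{w\}$ is an affine equiangular set with respect to $\sr$ properly containing $\omega$.

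Switching preserves the required property. If $(v,u)\in\{0,1\}$, then $(v,\sr-u)=1-(v,u)\in\{0,1\}$. Hence, by Theorem~\ref{thm:main}, we may assume $\omega=\omega_0\cup\omega_1\cup\omega_2\cup\omega_3$ with Latin square $S$ and cliques $C_1,C_2,C_3$.

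\textbf{Ansatz.} I will look for $v$ of the form
\[
v=(0,0,\alpha_J,0,0,\varepsilon\alpha_1,\alpha_1),\qquad J\in\tbinom{[6]}{3},\ \varepsilon\in\{\pm1\}.
\]
Its norm is $\tfrac32+\tfrac12+\tfrac12=\tfrac52<3$, and $(v,\sr)=(\alpha_1,2\alpha_1)=1$. The inner products split by coset.

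On $\omega_0$ the key point is that the condition does not depend on $S$:
\[
(v,x_{i,j,k})=(\alpha_J,\alpha_5^{(k)})+\tfrac12=-(\alpha_J,e_k)+\tfrac12\in\{0,1\},
\]
because $\alpha_J$ has all coordinates equal to $\pm\tfrac12$.

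On $\omega_1$ and $\omega_2$ only the sixth and seventh components meet $v$. So $(v,x^{(1)}_{I,\varepsilon_1,\varepsilon_2})=\tfrac12(\varepsilon\varepsilon_2+1)\in\{0,1\}$, and similarly for $\omega_2$ with $\varepsilon_1$ in place of $\varepsilon_2$. This holds for any cliques $C_1,C_2$.

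On $\omega_3$ the fourth and fifth components of $v$ vanish, so
\[
(v,x^{(3)}_{I,\varepsilon',1})=(\alpha_J,\alpha_I)+\tfrac12=|I\cap J|-1 .
\]
This lies in $\{0,1\}$ exactly when $|I\cap J|\in\{1,2\}$. That is, $J$ must be neither equal to nor complementary to any $I$ occurring in $C_3$.

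\textbf{Choice of $J$.} The clique $C_3$ has $7$ vertices, so at most $7$ distinct $3$-sets occur in it. These, together with their complements, exclude at most $14$ of the $20$ elements of $\binom{[6]}{3}$, so a suitable $J$ always exists. Concretely, for $\pm c_1$ the sets occurring are $135,146,236,245,123,345,156$. Their complements are $246,235,145,136,456,126,234$. So $J=\{1,2,4\}$ works, since neither $124$ nor its complement $356$ appears. The case $\pm c_2$ is handled the same way. Alternatively, the counting argument above covers all cliques at once, without invoking Proposition~\ref{prop:J63K27clique}.

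\textbf{Main obstacle.} The main difficulty is the $\omega_0$ part, which must be handled uniformly over the enormous family of Latin squares. The step to get right is seeing that a component $\alpha_J$, with all coordinates $\pm\tfrac12$, makes every $\omega_0$ inner product equal to $\tfrac12\pm\tfrac12$ regardless of $S$. After that, only the finite bookkeeping on $\omega_3$ remains. Note that any $v$ of norm exactly $3$ inside $\R L$ is already ruled out by Theorem~\ref{thm:maximality}, which is why the extra orthogonal direction is essential.
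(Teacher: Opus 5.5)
Your proof is correct and follows the paper's argument. The paper uses the same norm-$5/2$ vectors $y_{I,\varepsilon}=(\alpha_I,0,0,\varepsilon\alpha_1,0,0,\alpha_1)$, placed on the first $A_5$ component and checked against $C_1$ via the count $20-14=6$; you use the mirror image on the third component against $C_3$. Your explicit lift $w=v+\sqrt{3-(v,v)}\,e$ and your remark that switching preserves the conditions simply spell out the reduction that the paper states as ``it suffices''.
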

\begin{proof}
	We fix a maximum affine equiangular set $\omega$ in $L$ with respect to the switching root $\sr$.
	By Theorem~\ref{thm:main}, we may assume that
	$\omega = \omega_0 \cup \omega_1 \cup \omega_2 \cup \omega_3$
	for some Latin square $\latin$ of order $6$ and three cliques $C_1, C_2, C_3$ of size $7$ in $K$, where $\omega_i$ ($i=0,1,2,3$) are defined in~\eqref{thm:main:omega0} and \eqref{thm:main:omegai}.

	It suffices to prove that there exists a vector $y \in \R L$ such that $(y,y) \leq 3$, $(y, \rho) =1 $ and $(y,x) \in \{0,1\}$ for every $x \in \omega$.
	For $I \in \binom{[6]}{3}$ and $\varepsilon \in \{ \pm 1\}$, we define
	\begin{align}\label{thm:stronglymaximality:y}
		y_{I,\varepsilon} := \left( \alpha_I, 0,0, \varepsilon \alpha_1,0 ,0 , \alpha_1 \right) \in \R L.
	\end{align}
	Then,
	$(y_{I,\varepsilon}, y_{I,\varepsilon}) = \frac{5}{2}$
	and
	$(y_{I,\varepsilon}, \rho) = 1$.
	Also, for every $x_{i,j,S_{ij}} \in \omega_0$,
	\begin{align*}
		(y_{I,\varepsilon}, x_{i,j,S_{ij}})
		 & = ( \alpha_I, \alpha_5^{(i)} ) + 0 + 0 + 0 + 0 + 0 + \frac{1}{2}
		= \begin{cases}
			  1 & \text{if } i \in I, \\
			  0 & \text{otherwise}.
		  \end{cases}
	\end{align*}
	For $(I',\varepsilon') \in C_i$ ($i=2,3$), we have $(y_{I,\varepsilon}, x^{(i)}_{I',\varepsilon',1}) \in \{0,1\}$.
	For $(I',\varepsilon') \in C_1$,
	\begin{align*}
		(y_{I,\varepsilon}, x^{(1)}_{I',\varepsilon',1})
		 & = ( \alpha_I, \alpha_{I'}) + 0 + 0 + 0 + 0 + 0 + \frac{1}{2}
		= |I \cap I'| - 1.
	\end{align*}
	This implies that
	\begin{align}
		(y_{I,\varepsilon}, x^{(1)}_{I',\varepsilon',1}) \in \{0,1\} \iff I \not\in \{ I', [6] \setminus I' \}.
	\end{align}
	Since $|\binom{[6]}{3}| - 2|C_1| = 20 - 14 = 6$, there exist $I \in \binom{[6]}{3}$ such that $I \not\in \{ I' , [6] \setminus I' \}$ for every $(I',\varepsilon') \in C_1$.
	Then, the vector $y := y_{I,1}$ satisfies the desired conditions.
	Therefore, $\omega$ is not strongly maximal.
\end{proof}

\begin{remark}
	As in the proof of Theorem~\ref{thm:stronglymaximality}, there exist many feasible vectors $y$, that is, vectors $y \in \R L$ such that $(y,y) \leq 3$, $(y, \rho) =1 $ and $(y,x) \in \{0,1\}$ for every $x \in \omega$.
	Indeed, we have $6 \cdot 2 = 12$ feasible vectors $y_{I,\varepsilon}$ defined in~\eqref{thm:stronglymaximality:y} for $I \in \binom{[6]}{3} \setminus \{ I', [6] \setminus I' : (I',\varepsilon') \in C_1 \}$ and $\varepsilon \in \{ \pm 1\}$.
	Similarly, by symmetry, we also have $12$ feasible vectors corresponding to $C_i$ for each $i = 2,3$.

	Using a computer, we investigated randomly chosen maximum affine equiangular sets $\omega$ in $L$ with respect to the switching root $\sr$.
	In the case where $\langle \omega \cup \{\sr\} \rangle = L$, our experiments indicate that the feasible vectors are exhausted by the $36$ vectors described above.
	In the case where $\langle \omega \cup \{\sr\} \rangle \neq L$,
	that is, $\langle \omega \cup \{\sr\} \rangle$ is isometric to $L_{54}$ by Proposition~\ref{prop:classifylattice},
	our experiments indicate that there are $90$ feasible vectors of norm $5/2$ and $108$ feasible vectors of norm $7/3$.
	One may ask whether adding such feasible vectors allows us to construct a larger affine equiangular set in dimension $20$.
	However, computational checks show that at most $12$ vectors can be added (for randomly chosen $\omega$).
	This yields a set of $57+12=69$ equiangular lines in $\mathbb{R}^{19}$,
	which does not improve the best known lower bound $N(19) \geq 72$.
\end{remark}

%\section*{Acknowledgements}
%Akihiro Munemasa was supported by JSPS KAKENHI Grant Number 25K07095.
%Ferenc Sz\"{o}ll\H{o}si was supported in part by JSPS KAKENHI Grant Number 24K06829.

\bibliographystyle{plain}
\bibliography{references}
\end{document}